\documentclass[12]{amsart}
\usepackage{amsfonts}
\usepackage{mathrsfs}
\usepackage{amsmath}
\usepackage{amssymb}
\usepackage{graphicx}
\setlength{\textwidth}{12.6cm} \setlength{\textheight}{20.5cm}
\setlength{\evensidemargin}{\oddsidemargin}
\setlength{\topmargin}{0pt}
\newtheorem{thm}{Theorem}[section]
\newtheorem{cor}[thm]{Corollary}
\newtheorem{lem}[thm]{Lemma}

\newtheorem{prop}[thm]{Proposition}

\theoremstyle{definition}
\newtheorem{defn}[thm]{Definition}

\numberwithin{equation}{section}

\begin{document}
\vbox{\vskip 1cm}

\title{\small A generalization of supplemented modules }
\maketitle
\begin{center}
Yongduo Wang
\vskip 4mm Department of Applied Mathematics, Lanzhou University of Technology\\
Lanzhou 730050, P. R. China\\
 E-mail: ydwang@lut.cn

\end{center}

\bigskip
\leftskip10truemm \rightskip10truemm \noindent {\bf Abstract}
 Let $M$ be a left module over a ring $R$ and $I$ an ideal of $R$. $M$ is called an $I$-supplemented module (finitely
$I$-supplemented module) if for every submodule (finitely generated
submodule ) $X$ of $M$, there is a submodule $Y$ of $M$ such that
$X+Y=M$, $X\cap Y\subseteq IY$ and $X\cap Y$ is PSD in $Y$. This
definition generalizes supplemented modules and
$\delta$-supplemented modules. We characterize
 $I$-semiregular, $I$-semiperfect and $I$-perfect rings which are defined by
 Yousif and Zhou [15] using $I$-supplemented modules.
 Some well known results are obtained as corollaries.\\

\noindent {\bf Keywords:} Semiregular, Semiperfect, Supplemented
module, Small submodule

\leftskip0truemm \rightskip0truemm
\bigskip

\baselineskip=20pt

\section{\bf Introduction and Preliminaries}

\noindent It is well known that supplemented modules play an
important role in characterizing semiperfect, semiregular and
perfect rings. Recently, some authors had worked with various
extensions of these rings (see for examples [2, 8, 10, 15, 16]). As
generalizations of semiregular rings, semiperfect rings and perfect
rings, the notions of $I$-semiregular rings, $I$-semiperfect rings
and $I$-perfect rings were introduced by Yousif and Zhou [15]. Our
purposes of this paper is to chracterize $I$-semiregular rings,
$I$-semiperfect rings and $I$-perfect rings by defining
$I$-supplemented modules.

Let $R$ be a ring and $I$ an ideal of $R$, $M$ a module and $S\leq
M$. $S$ is called \emph{small} in $M$ (notation $S\ll M$) if $M\neq
S + T$ for any proper submodule $T$ of $M$.  As a proper
generalization of small submodules, the concept of $\delta$-small
submodules was introduced by Zhou[16]. $N$ is said to be
\emph{$\delta$-small} in $M$ if, whenever $N+X=M$, $M/X$ singular,
we have $X=M$. $\delta(M)=Rej_{M}(\wp)=\cap\{N\leq M\mid
M/N\in\wp\}$ , where $\wp$ be the class of all singular simple
modules. Let $N, L\leq M$. $N$ is called a \emph{supplement} of $L$
in $M$ if $N + L = M$ and $N$ is minimal with respect to this
property. Equivalently, $M = N + L$ and $N\cap L\ll N$. A module $M$
is called \emph{supplemented} if every submodule of $M$ has a
supplement in $M$. A module $M$ is said to be \emph{lifting} if for
any submodule $N$ of $M$, there exists a direct summand $K$ of $M$
such that $K\leq N$ and $N/K\ll M/K$, equivalently, for every
submodule $N$ of $M$, $M$ has a decomposition with $M=M_1\oplus
M_2$, $M_1\leq N$ and $M_2\cap N$ is small in $M_2$. $N$ is called a
\emph{$\delta$-supplement} [5] of $L$ if $M=N+L$ and $N\cap
L\ll_{\delta}N$. $M$ is called a \emph{$\delta$-supplemented module}
if every submodule of $M$ has a $\delta$-supplement. A module $M$ is
said to be \emph{$\delta$-lifting} [5] if for any submodule $N$ of
$M$, there exists a direct summand $K$ of $M$ such that $K\leq N$
and $N/K\ll_{\delta} M/K$, equivalently, for every submodule $N$ of
$M$, $M$ has a decomposition with $M=M_1\oplus M_2$, $M_1\leq N$ and
$M_2\cap N$ is $\delta$-small in $M_2$.  An element $m$ of $M$ is
called \emph{$I$-semiregular} [2] if there exists a decomposition
$M=P\oplus Q$ where $P$ is projective, $P\subseteq Rm$ and $Rm\cap
Q\subseteq IM$. $M$ is called \emph{an $I$-semiregular module} if
every element of $M$ is $I$-semiregular. $R$ is called
\emph{$I$-semiregular} if $_{R}R$ is an $I$-semiregular module. Note
that $I$-semiregular rings are left-right symmetric and $R$ is
($\delta-$) semiregular if and only $R$ is ($\delta(_{R}R)-)$
$J(R)$-semiregular. $M$ is called \emph{an $I$-semiperfect module}
[10] if for every submodule $K$ of $M$, there is a decomposition
$M=A\oplus B$ such that $A$ is projective, $A\subseteq K$ and $K\cap
B\subseteq IM$. $R$ is called \emph{$I$-semiperfect} if $_{R}R$ is
an $I$-semiperfect module. Note that $R$ is ($\delta-$) semiperfect
if and only $R$ is ($\delta(_{R}R)-)$ $J(R)$-semiperfect. $R$ is
called \emph{a left $I$-perfect ring} [15] if, for any submodule $X$
of a projective module $P$, $X$ has a decomposition $X=A\oplus B$
where $A$ is a direct summand of $P$ and $B\subseteq IP$. By [10,
Proposition 2.1], $R$ is a left $I$-perfect ring if and only if
every projective module is an $I$-semiperfect module. For other
standard definitions we refer to [3, 4, 14].

In this note all rings are associative with identity and all modules
are unital left modules unless specified otherwise. Let $R$ be a
ring and $M$ a module. We use $Rad(M)$, $Soc(M)$, $Z(M)$ to indicate
the Jacobson radical, the socle, the singular submodule of $M$
respectively. $J(R)$ is the radical of $R$ and $I$ is an ideal of
$R$.

\section{\bf PSD submodules and $I$-supplemented modules}

In this section, we give some properties of PSD submodules and use
PSD submodules to define (finitely) $I$-supplemented modules and
$I$-lifting modules which are generalizations of some well-known
supplemented modules and lifting modules. Some properties of
$I$-supplemented modules are discussed. We begin this section with
the following definitions.

\begin{defn} ([12]) Let $I$ be an ideal of $R$ and $N\leq M$. $N$ is PSD in
$M$ if there exists a projective summand $S$ of $M$ such that $S\leq
N$ and $M=S\oplus X$ whenever $N+X=M$ for any submodule $X\leq M$.
$M$ is PSD for $I$ if any submodule of $IM$ is PSD in $M$. $R$ is a
left PSD ring for $I$ if any finitely generated free left $R$-module
is PSD for $I$.
\end{defn}

\begin{lem} Let $M$ and $N$ be modules.
\begin{enumerate}
\item If $K$ is PSD in $M$ and $f: M\rightarrow N$ is an
epimorphism, then $f(K)$ is PSD in $N$.
\item If $L\leq N\leq M$ and $L$ is PSD in $N$, then $L$ is PSD in $M$.

\item If $L\leq N\leq M$ and $N$ is PSD in $M$, then $L$ is PSD in
$M$.
\item Let $M=M_{1}\oplus M_{2}$. If $N_{1}$ is PSD in $M_{1}$
and $N_{2}$ is PSD in $M_{2}$, then $N_{1}\oplus N_{2}$ is PSD in
$M$.

\item Let $N$ be a direct summand of $M$ and $A\leq N$. Then $A$ is
PSD in $M$ if and only if $A$ is PSD in $N$.

\end{enumerate}
\end{lem}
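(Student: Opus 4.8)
The plan is to prove the five statements in order, freely using the earlier parts once they are established. Throughout I read the defining property of ``$N$ is PSD in $M$'' in the only non-degenerate way, namely: for every submodule $X$ with $N+X=M$ there is a projective direct summand $S$ of $M$ with $S\le N$ and $M=S\oplus X$ (the witness $S$ being allowed to depend on $X$).

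For (1), given an epimorphism $f\colon M\to N$ and $Y\le N$ with $f(K)+Y=N$, I would pull back along $f$ by setting $X=f^{-1}(Y)$. Since $\ker f\subseteq X$ and $f$ is onto, a short diagram chase gives $f(X)=Y$ and $K+X=M$, so the PSD-property of $K$ yields a projective summand $S\le K$ with $M=S\oplus X$. Because $\ker f\subseteq X$ and $S\cap X=0$, the restriction $f|_{S}$ is injective, so $f(S)\cong S$ is projective; and $f(s)\in Y$ forces $s\in f^{-1}(Y)=X$, whence $f(S)\cap Y=0$. Applying $f$ to $M=S\oplus X$ then gives $N=f(S)\oplus Y$, so $f(S)\le f(K)$ is the required projective summand. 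For (2) I would use the modular law: if $L+X=M$ with $L\le N$, then $N=L+(N\cap X)$, and the PSD-property of $L$ in $N$ gives a projective summand $S\le L$ of $N$ with $N=S\oplus(N\cap X)$; one checks $M=N+X$ and hence $M=S+X$, with $S\cap X\subseteq S\cap(N\cap X)=0$, so $M=S\oplus X$ exhibits $S$ as the desired summand of $M$.

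The main obstacle is (3), where the PSD-property of the larger module $N$ only produces a projective summand $S\le N$, whereas the definition for $L$ demands a summand lying inside $L$. My plan here is to repair this using projectivity. Given $X$ with $L+X=M$, from $N\le M$ I get $N+X=M$, so there is a projective summand $S\le N$ with $M=S\oplus X$; let $\pi\colon M\to S$ be the projection along $X$. Since $L+X=M$, I have $\pi(L)=\pi(M)=S$, so $\pi|_{L}\colon L\to S$ is onto with kernel $L\cap X$; as $S$ is projective this splits, giving $L=(L\cap X)\oplus S'$ with $\pi|_{S'}\colon S'\to S$ an isomorphism. Then $S'\le L$ is projective, $S'\cap X=0$ because $\pi$ is injective on $S'$, and $S'+X=M$ because $\pi(S')=S=\pi(M)$; hence $M=S'\oplus X$ and $S'$ is the summand witnessing that $L$ is PSD in $M$.

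Finally, (2) reduces (4) and (5) to manageable computations. For (5), the direction $\Leftarrow$ is immediate from (2) since $N\le M$; for $\Rightarrow$, writing $M=N\oplus N'$ and taking $X\le N$ with $A+X=N$, I would apply the PSD-property of $A$ in $M$ to the relation $A+(X+N')=M$, obtaining a projective summand $S\le A$ with $M=S\oplus(X+N')$, and then use the modular law inside $N$ to conclude $N=S\oplus X$. For (4), I would first invoke (2) to regard $N_{1}$ and $N_{2}$ as PSD in $M$. Given $X$ with $(N_{1}\oplus N_{2})+X=M$, the PSD-property of $N_{1}$ applied to $N_{1}+(N_{2}+X)=M$ gives a projective summand $S_{1}\le N_{1}$ with $M=S_{1}\oplus(N_{2}+X)$; then the PSD-property of $N_{2}$ applied to $N_{2}+(S_{1}+X)=M$ gives a projective summand $S_{2}\le N_{2}$ with $M=S_{2}\oplus(S_{1}+X)$. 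A routine check, using $S_{1}\cap(N_{2}+X)=0$ and $S_{2}\cap(S_{1}+X)=0$, shows $S_{1}\cap S_{2}=0$ and $(S_{1}\oplus S_{2})\cap X=0$, so $M=(S_{1}\oplus S_{2})\oplus X$ with $S_{1}\oplus S_{2}\le N_{1}\oplus N_{2}$ a projective summand. The only points needing real care are the disjointness bookkeeping in (4) and the verification in (1) that the witnessing summand $f(S)$ genuinely splits off in $N$.
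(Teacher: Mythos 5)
Your proposal is correct, and for parts (1)--(3) it follows essentially the same route as the paper: pulling back along $f^{-1}$ for (1), the modular law for (2), and for (3) using projectivity of the complementary summand to split the surjection from $L$ (the paper phrases this via $M/K\cong H$ and the natural epimorphism $L\to M/K$, you via the projection $\pi\colon M\to S$ along $X$; these are the same argument). The only substantive difference is in (4) and (5), where the paper simply writes ``See [12]'' and you supply complete proofs: for (5) the forward direction via intersecting the decomposition $M=S\oplus(X+N')$ back into $N$, and for (4) a two-step application of the PSD property producing $S_{1}$ and then $S_{2}$ relative to $S_{1}+X$, with the disjointness checks $S_{1}\cap S_{2}\subseteq S_{1}\cap N_{2}=0$ and $(S_{1}\oplus S_{2})\cap X=0$ carried out correctly. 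So your write-up is self-contained where the paper is not; the one cosmetic slip is in (3), where ``from $N\le M$'' should read ``from $L\le N$'' to justify $N+X=M$, but the inference itself is right.
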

\begin{proof} $(1)$ Let $f(K)+L=N, L\leq N$. Then $K+f^{-1}(L)=M$.
Since $K$ is PSD in $M$, there is a projective summand $H$ of $M$
with $H\leq K$ such that $H\oplus f^{-1}(L)=M$. So $f(H)\oplus L=N,
f(H)\subseteq f(K)$. It is easy to see that $f(H)\cong H$ is
projective.

$(2)$ Let $M=L+X$, $X\leq M$. Then $N=L+(N\cap X)$. Since $L$ is PSD
in $N$, there is a projective summand $H$ of $N$ with $H\leq L$ such
that $N=H\oplus (N\cap X)$, and hence $L=H\oplus (L\cap X)$. So
$M=H\oplus X$.

$(3)$ Let $M=L+K, K\leq M$, then $M=N+K$. Since $N$ is PSD in $M$,
there is a projective summand $H$ of $M$ with $H\leq N$ such that
$M=H\oplus K$, and hence $M/K\cong H$ is projective. Thus the
natural epimorphism $f: L\rightarrow M/K$ splits and $Kerf=L\cap K$
is a direct summand of $L$. Write $L=(L\cap K)\oplus Q, Q\leq L$, we
have $M=Q\oplus K$. The rest is obvious.

$(4)$ and $(5)$ See [12].

\end{proof}

\begin{prop} Let $M$ be a module and $N\leq M$.
\begin{enumerate}
\item $N\ll M$ if and only if $N\subseteq Rad(M)$, $N$ is PSD in
$M$.
\item $N\ll_{\delta} M$ if and only if $N\subseteq \delta(M)$, $N$ is PSD in
$M$.
\end{enumerate}
\end{prop}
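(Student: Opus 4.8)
The plan is to treat the two equivalences in parallel: in each case I prove the forward implication by combining a standard radical-type containment with a splitting property, and the backward implication by using the PSD decomposition to produce a projective semisimple direct summand which the containment hypothesis then forces to vanish.

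For the forward implications I would first record the two containments. If $N \ll M$ and $L$ is any maximal submodule of $M$, then $N \subseteq L$ (otherwise $N + L = M$ with $L$ proper, contradicting smallness), so $N \subseteq Rad(M)$; the same argument applied to maximal submodules $L$ with $M/L$ singular simple gives $N \subseteq \delta(M)$ when $N \ll_{\delta} M$. For the PSD clause in (1) there is nothing to do: if $N + X = M$ then smallness forces $X = M$, so the zero submodule is the required projective summand and $M = 0 \oplus X$. For the PSD clause in (2) I would invoke the known splitting behaviour of $\delta$-small submodules (Zhou [16]): whenever $N \ll_{\delta} M$ and $N + X = M$, one can split off a projective semisimple direct summand $Y \leq N$ with $M = X \oplus Y$, which is exactly the statement that $N$ is PSD in $M$.

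For the backward implications the PSD hypothesis supplies, for a given $X$ with $N + X = M$, a projective semisimple direct summand $Y \leq N$ with $M = X \oplus Y$, and the task is to show $Y = 0$, for then $X = M$. In case (1), since $Y \leq N \subseteq Rad(M)$ and $Y$ is a direct summand of $M$, we get $Y = Y \cap Rad(M) = Rad(Y)$; as $Y$ is semisimple, $Rad(Y) = 0$, whence $Y = 0$ and $N \ll M$. In case (2) I would in addition use the singularity hypothesis: if $N + X = M$ with $M/X$ singular, then $Y \cong M/X$ is singular, while $Y$, being projective and semisimple, is a direct sum of projective simple modules and is therefore nonsingular; thus $Z(Y) = Y$ and $Z(Y) = 0$, forcing $Y = 0$ and $X = M$, i.e. $N \ll_{\delta} M$.

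The routine parts are the two radical containments and the trivial PSD verification in (1). The main obstacle is the forward direction of (2), namely producing the projective semisimple summand out of $\delta$-smallness; I would either cite Zhou's characterization [16] or reprove it by restricting the decomposition to $N$ and using that $M/X$ splits off its projective (nonsingular) part when $M/X$ is not singular. The one genuinely delicate point in the backward directions is the clean use of the projective/semisimple dichotomy: that a projective semisimple summand lying in $Rad(M)$ must be zero, and that a projective semisimple module cannot be singular unless it is zero. Once these facts are isolated, both backward implications close immediately.
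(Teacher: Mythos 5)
Your overall strategy matches the paper's, but there is a genuine gap in the backward direction of (2), stemming from a misreading of the definition of PSD. The definition only provides a \emph{projective} direct summand $Y\leq N$ with $M=Y\oplus X$; it says nothing about $Y$ being semisimple. You assert semisimplicity at the point where you extract $Y$, and then your entire argument in case (2) rests on it: "$Y$, being projective and semisimple, is a direct sum of projective simple modules and is therefore nonsingular." Without semisimplicity this step has no support, and "projective and singular implies zero" is not something you can take for granted. Notice also that your backward argument for (2) never uses the hypothesis $N\subseteq\delta(M)$ -- a sure sign something is missing, since that containment is exactly what is needed. The paper's proof supplies the missing link: since $Y$ is a direct summand of $M$ contained in $\delta(M)$, one gets $\delta(Y)=Y$, and a projective module $H$ with $\delta(H)=H$ is semisimple by [10, Proposition 2.13]; only then does the conclusion $N\ll_{\delta}M$ follow (the paper closes via [16, Lemma 1.2], which is equivalent to your singular/nonsingular clash).

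The same spurious "semisimple" appears in your case (1), but there it is harmless and easily excised: from $Y\leq N\subseteq Rad(M)$ and $Y$ a direct summand you correctly get $Rad(Y)=Y$, and then projectivity alone forces $Y=0$ (a nonzero projective module has a maximal submodule), which is exactly the paper's argument -- no appeal to semisimplicity is needed or available. The forward implications are fine: the radical containments are standard, the PSD clause in (1) is trivially witnessed by $S=0$, and citing Zhou's characterization of $\delta$-small submodules for the PSD clause in (2) is precisely what the paper's "clear" is hiding. So the repair is local: delete the claim that PSD yields a semisimple summand, argue (1) from projectivity and $Rad(Y)=Y$, and in (2) derive semisimplicity of $Y$ from $\delta(Y)=Y$ before invoking the nonsingularity of projective semisimple modules.
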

\begin{proof}

$(1)$ $``\Rightarrow"$ is clear.

$``\Leftarrow"$ Let $M=N+L, L\leq M$. Since $N$ is PSD in $M$, there
is a projective summand $H$ of $M$ with $H\subseteq N\subseteq
Rad(M)$ such that $M=H\oplus L$. So $Rad(H)\oplus
Rad(L)=Rad(M)=H\oplus Rad(L)$. Thus $Rad(H)=H$. Since $H$ is
projective, $H=0$, and hence $L=M$.

$(2)$ $``\Rightarrow"$ is clear.

$``\Leftarrow"$ Let $M=N+L, L\leq M$. Since $N$ is PSD in $M$, there
is a projective summand $H$ of $M$ with $H\subseteq N\subseteq
\delta(M)$ such that $M=H\oplus L$. So $\delta(H)\oplus
\delta(L)=\delta(M)=H\oplus \delta(L)$. Thus $\delta(H)=H$. Since
$H$ is projective, $H$ is semisimple by [10, Proposition 2.13]. Thus
$N\ll_{\delta} M$ by [16, Lemma 1.2].

\end{proof}

\begin{cor} Let $M$ be a module. Then
\begin{enumerate}
\item $M$ is ($\delta$-) supplemented if and only if for every submodule
$X$ of $M$, there is a submodule $Y$ of $M$ such that $X+Y=M$,
$X\cap Y\subseteq (\delta (Y))$ $Rad(Y)$ and $X\cap Y$ is PSD in
$Y$.
\item $M$ is ($\delta$-) lifting if and only if for every submodule
$X$ of $M$, there is a decomposition $M=A\oplus B$ such that
$A\subseteq X$ and $X\cap B\subseteq (\delta (B))$ $Rad(B)$ and
$X\cap B$ is PSD in $B$.

\end{enumerate}
\end{cor}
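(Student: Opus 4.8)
The plan is to deduce both equivalences directly from Proposition 2.3, by rewriting the smallness condition that appears in the definition of each class of modules.

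For part (1), recall that $M$ is supplemented precisely when every submodule $X$ admits a submodule $Y$ with $M=X+Y$ and $X\cap Y\ll Y$, and $\delta$-supplemented when instead $X\cap Y\ll_{\delta}Y$. Fixing such a pair $X,Y$, I would apply Proposition 2.3(1) to the submodule $N=X\cap Y$ of the module $Y$: it yields that $X\cap Y\ll Y$ is equivalent to the conjunction of $X\cap Y\subseteq Rad(Y)$ and $X\cap Y$ being PSD in $Y$. Hence the existence of a supplement $Y$ of $X$ is equivalent, for each $X$, to the existence of a $Y$ meeting the three listed conditions, which is exactly the claimed characterization. The $\delta$-case follows verbatim from Proposition 2.3(2), with $\delta(Y)$ in place of $Rad(Y)$.

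For part (2), I would use the decomposition form of liftingness recalled in the Introduction: $M$ is lifting iff for every submodule $X$ there is a decomposition $M=A\oplus B$ with $A\subseteq X$ and $X\cap B\ll B$, and $\delta$-lifting iff instead $X\cap B\ll_{\delta}B$. Applying Proposition 2.3(1) to the submodule $X\cap B$ of the module $B$ converts $X\cap B\ll B$ into the pair $X\cap B\subseteq Rad(B)$ and $X\cap B$ PSD in $B$; Proposition 2.3(2) does the same in the $\delta$-setting, with $\delta(B)$ replacing $Rad(B)$. As the decomposition $M=A\oplus B$ is carried along unchanged, this is precisely the asserted equivalence.

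There is essentially no real obstacle here, since the entire content is supplied by Proposition 2.3 and the corollary is merely a reformulation of the definitions. The only point deserving care is to invoke the correct ambient module in each application, namely $Y$ in part (1) and $B$ in part (2) rather than $M$, so that both the radical (or $\delta$) invariant and the PSD property are computed in the intended module.
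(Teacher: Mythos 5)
Your proposal is correct and matches the paper's intent exactly: the corollary is stated without proof precisely because it is the immediate specialization of Proposition 2.3 to the submodule $X\cap Y$ of $Y$ (resp. $X\cap B$ of $B$) in the definitions of (\,$\delta$-)supplemented and (\,$\delta$-)lifting modules. Your remark about applying the proposition in the ambient module $Y$ or $B$ rather than $M$ is the only point of care, and you handle it correctly.
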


\begin{defn} Let $R$ be a ring and $I$ an ideal of $R$, $M$ a
module. $M$ is called an $I$-supplemented module (finitely
$I$-supplemented module) if for every submodule (finitely generated
submodule ) $X$ of $M$, there is a submodule $Y$ of $M$ such that
$X+Y=M$, $X\cap Y\subseteq IY$ and $X\cap Y$ is PSD in $Y$. In this
case, we call $Y$ is an $I$-supplement of $X$ in $M$. $M$ is called
$I$-lifting if for every submodule $X$ of $M$, there is a
decomposition $M=A\oplus B$ such that $A\subseteq X$ and $X\cap
B\subseteq IB$ and $X\cap B$ is PSD in $B$.

\end{defn}

\begin{thm} Consider the following statements for a module $M$.

\begin{enumerate}
\item $M$ is a $J(R)$-supplemented module (a $\delta(_{R}R)$-supplemented module,
respectively).
\item $M$ is a supplemented module ( a $\delta$-supplemented module,
respectively).

\end{enumerate}

then $``(1)\Rightarrow (2)"$, $``(2)\Rightarrow (1)"$ if $M$ is
projective or $R$ satisfies $J(R)M=Rad(M)$  $(\delta(R)M=\delta
(M))$ for any module $M$ over $R$.
\end{thm}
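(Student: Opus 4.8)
The plan is to read both implications through Proposition 2.3, which converts the ``smallness'' half of a supplement into the two conditions (containment in the radical and being PSD) that appear in Definition 2.5. Throughout I would treat the two parenthetical cases in parallel, pairing $\mathrm{Rad}$ with Proposition 2.3(1) for the ordinary case and $\delta$ with Proposition 2.3(2) for the $\delta$-case.

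For $(1)\Rightarrow(2)$ I would argue directly. Suppose $M$ is $J(R)$-supplemented and fix $X\leq M$; choose $Y\leq M$ with $X+Y=M$, $X\cap Y\subseteq J(R)Y$ and $X\cap Y$ PSD in $Y$. The only extra input needed is the standard inclusion $J(R)Y\subseteq\mathrm{Rad}(Y)$ (valid for every module, since any homomorphism carries the radical into the radical). Hence $X\cap Y\subseteq\mathrm{Rad}(Y)$ and $X\cap Y$ is PSD in $Y$, so Proposition 2.3(1) gives $X\cap Y\ll Y$; thus $Y$ is a genuine supplement of $X$ and $M$ is supplemented. The $\delta$-version is identical, replacing $J(R)Y\subseteq\mathrm{Rad}(Y)$ by $\delta({}_{R}R)Y\subseteq\delta(Y)$ and Proposition 2.3(1) by Proposition 2.3(2).

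For $(2)\Rightarrow(1)$ the easy case is the ring hypothesis $J(R)N=\mathrm{Rad}(N)$ (resp.\ $\delta({}_{R}R)N=\delta(N)$) for all $N$: given a supplement $Y$ of $X$ with $X\cap Y\ll Y$, Proposition 2.3(1) yields $X\cap Y\subseteq\mathrm{Rad}(Y)=J(R)Y$ together with the PSD condition, which is exactly what Definition 2.5 demands, so $M$ is $J(R)$-supplemented. The substance of the theorem is therefore the projective case, where I cannot assume $\mathrm{Rad}(Y)=J(R)Y$ for an arbitrary supplement $Y$; instead I must arrange $Y$ itself to be projective, since $\mathrm{Rad}(P)=J(R)P$ (resp.\ $\delta(P)=\delta({}_{R}R)P$) holds for every projective $P$.

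So the key step, and the main obstacle, is this: for projective supplemented $M$ and any $X\leq M$, I must exhibit a supplement of $X$ that is a direct summand (hence projective). I would draw on the classical fact that a projective module is supplemented if and only if it is lifting (equivalently, semiperfect). Lifting then hands me, for the given $X$, a decomposition $M=A\oplus B$ with $A\subseteq X$ and $X\cap B\ll B$; since $A\subseteq X$ we get $X+B=M$, so $B$ is a supplement of $X$, and $B$, being a direct summand of the projective module $M$, is projective. The mechanism underlying this is semiperfectness: the canonical epimorphism $M\to M/X$ factors through a projective cover $p\colon P_{0}\to M/X$, projectivity of $M$ splits the resulting epimorphism $M\to P_{0}$, and the summand carried isomorphically onto $P_{0}$ is exactly such a projective supplement. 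With $Y=B$ I finish as before: $X\cap Y\ll Y$ gives, through Proposition 2.3(1), both $X\cap Y\subseteq\mathrm{Rad}(Y)=J(R)Y$ and the PSD condition, so $Y$ is a $J(R)$-supplement and $M$ is $J(R)$-supplemented. The $\delta$-case runs the same way, replacing ``lifting'' by ``$\delta$-lifting'' and $\mathrm{Rad}(P)=J(R)P$ by $\delta(P)=\delta({}_{R}R)P$ for projective $P$.
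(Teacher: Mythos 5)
Your proposal is correct and follows essentially the same route as the paper: both directions are read through Proposition~2.3, and for the projective case one produces a supplement that is a direct summand (hence projective, so that $\mathrm{Rad}(Y)=J(R)Y$, resp.\ $\delta(Y)=\delta({}_RR)Y$). The only cosmetic difference is that the paper simply asserts that in a projective supplemented module the supplement $Y$ is a direct summand (invoking the lifting/semiperfect equivalence explicitly only in the $\delta$-case), whereas you justify this step uniformly via the lifting decomposition.
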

\begin{proof} $``(1)\Rightarrow (2)"$ By Proposition 2.3.

$``(2)\Rightarrow (1)"$ Let $M$ be a supplemented module. Then for
every submodule $X$ of $M$, there is a submodule $Y$ of $M$ such
that $X+Y=M$ and $X\cap Y\ll Y$. Since $M$ is projective, $Y$ is a
direct summand of $M$, and hence $Y$ is projective. It is clear that
$X\cap Y\subseteq Rad(Y)=J(R)Y$ and $X\cap Y$ is PSD in $Y$. (Let
$M$ be a $\delta$-supplemented module. Since $M$ is projective, $M$
is $\delta$-lifting. Thus for every submodule $X$ of $M$, there is a
direct summand $Y$ of $M$ such that $M=X+Y$ and $X\cap Y\ll_{\delta}
Y$. The rest is obvious.) When $R$ satisfies $J(R)M=Rad(M)$
$(\delta(R)M=\delta (M))$ for any module $M$ over $R$, the proof is
similar.

\end{proof}

Similar to the proof of Theorem 2.6, we have the following.

\begin{thm} Consider the following statements for a module $M$.

\begin{enumerate}
\item $M$ is a finitely $J(R)$-supplemented module (a finitely $\delta(_{R}R)$-supplemented module,
respectively).
\item $M$ is a finitely supplemented module ( a finitely $\delta$-supplemented module,
respectively).

\end{enumerate}

then $``(1)\Rightarrow (2)"$, $``(2)\Rightarrow (1)"$ if $M$ is
projective or $R$ satisfies $J(R)M=Rad(M)$  $(\delta(R)M=\delta
(M))$ for any module $M$ over $R$.

\end{thm}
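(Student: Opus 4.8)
The plan is to mirror the proof of Theorem~2.6 but restrict all submodules under consideration to finitely generated ones, since the definitions of \emph{finitely supplemented}, \emph{finitely $\delta$-supplemented}, and \emph{finitely $I$-supplemented} differ from their unqualified counterparts only in that the submodule $X$ being supplemented is required to be finitely generated. For the direction $(1)\Rightarrow(2)$, I would start with a finitely generated submodule $X$ of $M$ and, using the finitely $J(R)$-supplemented hypothesis, obtain $Y\leq M$ with $X+Y=M$, $X\cap Y\subseteq J(R)Y$, and $X\cap Y$ PSD in $Y$. Since $J(R)Y\subseteq Rad(Y)$ always holds, we have $X\cap Y\subseteq Rad(Y)$, and together with the fact that $X\cap Y$ is PSD in $Y$, Proposition~2.3(1) yields $X\cap Y\ll Y$; thus $Y$ is a supplement of $X$ and $M$ is finitely supplemented. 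The $\delta$-case is identical, invoking Proposition~2.3(2) with $\delta(_{R}R)Y\subseteq\delta(Y)$ in place of the radical.

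For the reverse direction $(2)\Rightarrow(1)$ under the projectivity hypothesis, I would take a finitely generated submodule $X$ of the finitely supplemented projective module $M$ and extract a supplement $Y$ with $X+Y=M$ and $X\cap Y\ll Y$. The key structural point, exactly as in Theorem~2.6, is that $Y$ inherits projectivity: because $M$ is projective and $Y$ is a supplement, $Y$ is a direct summand of $M$, hence projective. Projectivity then forces $Rad(Y)=J(R)Y$, so from $X\cap Y\ll Y\subseteq Rad(Y)=J(R)Y$ we obtain the containment $X\cap Y\subseteq J(R)Y$ required by the definition, and smallness gives that $X\cap Y$ is PSD in $Y$ by Proposition~2.3(1). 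For the $\delta$-supplemented case I would pass through $\delta$-lifting as in the original proof: a projective $\delta$-supplemented module is $\delta$-lifting, so $Y$ may be taken to be a direct summand with $X\cap Y\ll_\delta Y$, whence $\delta(Y)=\delta(_{R}R)Y$ by projectivity and the argument closes. When instead $R$ satisfies $J(R)M=Rad(M)$ (respectively $\delta(R)M=\delta(M)$) for every module, the same containment is available without projectivity, so the proof runs identically.

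I do not expect any genuine obstacle here, since the statement is declared to be proved by analogy with Theorem~2.6 and the restriction to finitely generated submodules is preserved verbatim at every step of that argument. The one point requiring mild care is the $\delta$-supplemented direction of $(2)\Rightarrow(1)$: the route through $\delta$-lifting in Theorem~2.6 uses that a finitely generated submodule of a projective module that is $\delta$-supplemented still admits a \emph{direct-summand} $\delta$-supplement. I would verify that the passage from finitely $\delta$-supplemented to $\delta$-lifting behaves well on the finitely generated submodule $X$, but since $\delta$-supplements of any submodule in a projective module are direct summands, this causes no difficulty and the remaining verifications are routine.

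\begin{proof}
The proof is identical to that of \thmref{} (Theorem~2.6), replacing every submodule by a finitely generated submodule throughout; all the invoked results, namely Proposition~2.3 and the projectivity and radical-compatibility arguments, apply verbatim to finitely generated submodules.
\end{proof}
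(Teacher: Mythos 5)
Your proposal is correct and matches the paper exactly: the paper gives no separate argument for this theorem, stating only that it is ``Similar to the proof of Theorem 2.6,'' which is precisely the adaptation you carry out (restricting to finitely generated submodules and reusing Proposition~2.3 together with the projectivity/radical-compatibility steps).
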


\begin{thm} Consider the following statements for a module $M$.

\begin{enumerate}
\item $M$ is a $J(R)$-lifting module (a $\delta(_{R}R)$-lifting module,
respectively).
\item $M$ is a lifting module ( a $\delta$-lifting module,
respectively).

\end{enumerate}

then $``(1)\Rightarrow (2)"$, $``(2)\Rightarrow (1)"$ if $M$ is
projective or $R$ satisfies $J(R)M=Rad(M)$  $(\delta(R)M=\delta
(M))$ for any module $M$ over $R$.

\end{thm}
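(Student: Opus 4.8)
The plan is to mirror the proof of Theorem 2.6 (and 2.7), since the definition of an $I$-lifting module has exactly the same skeleton as that of a lifting module: both ask for a decomposition $M=A\oplus B$ with $A\subseteq X$, and they differ only in the condition imposed on $X\cap B$. The engine that converts one condition into the other is Proposition 2.3, which says that $X\cap B\ll B$ is equivalent to ``$X\cap B\subseteq Rad(B)$ and $X\cap B$ is PSD in $B$'' (and likewise with $\ll_{\delta}$ and $\delta(B)$). So throughout, the decomposition $M=A\oplus B$ stays fixed and only the clause on $X\cap B$ gets rewritten.

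For $``(1)\Rightarrow(2)"$, suppose $M$ is $J(R)$-lifting and let $X\leq M$. Choose the decomposition $M=A\oplus B$ with $A\subseteq X$, $X\cap B\subseteq J(R)B$ and $X\cap B$ PSD in $B$. Since $J(R)B\subseteq Rad(B)$ always holds, Proposition 2.3(1) immediately gives $X\cap B\ll B$; by the ``equivalently'' form of the definition of lifting this shows $M$ is lifting. The $\delta$-case is identical, using the analogous containment $\delta(R)B\subseteq\delta(B)$ together with Proposition 2.3(2). Note that this direction needs no hypothesis on $M$ or $R$.

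For $``(2)\Rightarrow(1)"$, suppose $M$ is lifting and let $X\leq M$, with decomposition $M=A\oplus B$, $A\subseteq X$ and $X\cap B\ll B$. By Proposition 2.3(1) this already yields $X\cap B\subseteq Rad(B)$ and $X\cap B$ PSD in $B$, so the only clause missing from the definition of $J(R)$-lifting is the sharper inclusion $X\cap B\subseteq J(R)B$, i.e.\ the identity $Rad(B)=J(R)B$. This is supplied by either hypothesis: if $R$ satisfies $J(R)M=Rad(M)$ for every module $M$, the identity holds verbatim; if instead $M$ is projective, then $B$, being a direct summand of $M$, is projective, and for a projective module $Rad(B)=J(R)B$. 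Either way $X\cap B\subseteq J(R)B$ with $X\cap B$ PSD in $B$, so $M$ is $J(R)$-lifting. The $\delta$-version runs the same way, replacing $Rad$ by $\delta$ and invoking the corresponding identity $\delta(B)=\delta(R)B$ for projective $B$.

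The only real obstacle is the identity $Rad(B)=J(R)B$ (respectively $\delta(B)=\delta(R)B$) for the summand $B$; this is precisely what projectivity buys us, and what the alternative hypothesis on $R$ postulates outright. The PSD clause requires no extra work, since the module $B$ is unchanged throughout, and the matching of ``$A\subseteq X$'' is immediate from the decomposition. I would cite the standard fact $Rad(P)=J(R)P$ for a projective module $P$, and for the $\delta$-statement the corresponding result from [16].
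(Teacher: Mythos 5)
Your proof is correct and follows exactly the route the paper intends (it gives no separate proof for this theorem, pointing instead to the proof of Theorem 2.6): Proposition 2.3 converts the smallness condition into the ``contained in the radical plus PSD'' condition, and the remaining gap is closed by $Rad(B)=J(R)B$ (resp.\ $\delta(B)=\delta(R)B$) for the projective summand $B$, or by the hypothesis on $R$. If anything, your argument is slightly cleaner than the supplemented case, since in the lifting setting $B$ is a direct summand by definition and no extra step is needed to see it is projective.
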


We know that if a ring $R$ is left ($\delta-$)semiperfect ring, then
$(\delta(R)M=\delta (M))$ $J(R)M=Rad(M)$ for any module $M$ over
$R$. So $``(1)\Leftrightarrow (2)"$ in Theorem 2.6, 2.7 and 2.8 if
$R$ is left ($\delta-$)semiperfect ring.

\begin{lem} Let $M$ be a module and $K, L, H\leq M$. If $K$ is an
$I$-supplement of $L$ in $M$, $L$ is an $I$-supplement of $H$ in
$M$, then $L$ is an $I$-supplement of $K$ in $M$.
\end{lem}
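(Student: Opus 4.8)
The plan is to verify directly the three defining conditions for $L$ to be an $I$-supplement of $K$ in $M$, namely $L+K=M$, $L\cap K\subseteq IL$, and $L\cap K$ is PSD in $L$. The first is immediate: since $K$ is an $I$-supplement of $L$ we already have $K+L=M$, hence $L+K=M$. So the content lies entirely in transferring the remaining two conditions from data about $K$ (and $H$) to data about $L$.

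My first step is to record what comes for free. Because $K$ is an $I$-supplement of $L$, the intersection $K\cap L$ is PSD in $K$; since $K\leq M$, Lemma 2.2(2) upgrades this to the statement that $K\cap L=L\cap K$ is PSD in $M$. Thus the remaining task is to descend this PSD property from the ambient module $M$ into the (non-summand) submodule $L$, and to prove the containment $L\cap K\subseteq IL$. For the descent I intend to exploit that $L$ is not an arbitrary submodule but an $I$-supplement of $H$, which should force a coclosedness-type property: a submodule of $L$ that is PSD in $M$ and sits inside $IL$ ought already to be PSD in $L$. Concretely, I would mimic the classical coclosed argument, taking $(L\cap K)+T=L$ with $T\leq L$, pushing along the epimorphism $L\rightarrow L/T$ (using Lemma 2.2(1) to track PSD-ness), and using the $I$-supplement relation $L+H=M$ together with $L\cap H\subseteq IL$ and $L\cap H$ PSD in $L$ to force $T=L$, which yields that $L\cap K$ is PSD in $L$.

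For the containment $L\cap K\subseteq IL$ I plan a parallel argument, comparing $L\cap K$ against the ``$I$-small part'' $L\cap H$ of $L$. From $K\cap L\subseteq IK$ and the projective summand $S\leq K\cap L$ supplied by PSD-ness one extracts $S=IS$ (since $S$ is a summand of $K$ lying in $IK$), and this summand is then transported into $L$ through the decompositions coming from $K+L=M$ and $L+H=M$. The main obstacle, and the step I expect to require the most care, is exactly this descent into $L$: Lemma 2.2 transfers PSD-ness freely upward, along epimorphisms, and into direct summands (parts (1)--(5)), whereas here $L$ behaves like a summand only by virtue of being an $I$-supplement, so the whole argument hinges on making precise and applying the coclosedness of the $I$-supplement $L$. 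Once that property is in hand, both $L\cap K\subseteq IL$ and the PSD-ness of $L\cap K$ in $L$ fall out, completing the verification that $L$ is an $I$-supplement of $K$.
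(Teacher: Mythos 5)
Your overall skeleton matches the paper's: reduce to the two conditions $K\cap L\subseteq IL$ and $K\cap L$ PSD in $L$, use Lemma 2.2(2) to lift the PSD property from $K$ to $M$, and then exploit the relation $M=L+H$ with $L\cap H\subseteq IL$ and $L\cap H$ PSD in $L$. But the pivotal step is wrong as you describe it: you propose to take $(L\cap K)+T=L$ and ``force $T=L$''. That would prove $L\cap K$ is \emph{small} in $L$, which is false in general and is precisely what the PSD notion is designed to avoid requiring --- $K\cap L$ may well contain a nonzero projective direct summand of $L$ (take $R$ semisimple and $I=R$ for an extreme case), so no argument can force $T=L$. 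What the definition demands instead is the \emph{construction} of a projective direct summand $Q$ of $L$ with $Q\leq K\cap L$ and $L=Q\oplus T$. The paper produces it in two stages: from $(K\cap L)+X=L$ one gets $M=(K\cap L)+X+H$; PSD-ness of $K\cap L$ in $M$ yields $M=Y\oplus (X+H)$ with $Y$ a projective summand inside $K\cap L$; intersecting with $L$ and invoking PSD-ness of $L\cap H$ in $L$ yields $L=Y\oplus X\oplus Y'$, so $L/X$ is projective; hence the natural epimorphism $K\cap L\rightarrow L/X$ splits, its kernel $K\cap X$ has a complement $Q$ in $K\cap L$, and $L=Q\oplus X$ with $Q\cong L/X$ projective. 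Your sketch contains no analogue of this splitting step, and without it the claim does not follow.

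The containment $K\cap L\subseteq IL$ is also handled in a roundabout way in your plan. It does not need any projective summand or the identity $S=IS$: since $K\cap L\subseteq IK$, write $l=\sum_{i}p_{i}k_{i}\in (IK)\cap L$ with $p_{i}\in I$, decompose each $k_{i}=l_{i}'+h_{i}$ via $M=L+H$, and observe that $\sum_{i}p_{i}l_{i}'\in IL$ while $\sum_{i}p_{i}h_{i}=l-\sum_{i}p_{i}l_{i}'\in L\cap H\subseteq IL$; hence $l\in IL$. I would rework both halves along these lines.
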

\begin{proof} Let $M=K+L=L+H$, $K\cap L\subseteq IK, L\cap
H\subseteq IL$ and $K\cap L$ be PSD in $K$, $L\cap H$ PSD in $L$. We
only show that $K\cap L\subseteq IL$ and $K\cap L$ is PSD in $L$. It
is easy to see that $K\cap L\subseteq IK\cap L$. Let
$l=\Sigma_{i=1}^{n}p_{i}k_{i} \in IK\cap L$, $p_{i}\in I, k_{i}\in
K$ and $k_{i}=l'_{i}+h_{i} (i=1, 2, \cdot\cdot\cdot, n)$, $l'_{i}\in
L, h_{i}\in H$. Since $L\cap H\subseteq IL$, $l\in IL$, and hence
$K\cap L\subseteq IL$. Next, we shall prove that $K\cap L$ is PSD in
$L$. Let $K\cap L+X=L, X\leq L$, then $M=L+H=K\cap L+X+H$. Since
$K\cap L$ be PSD in $K$, $K\cap L$ be PSD in $M$ by Lemma 2.2. Thus
there is a projective summand $Y$ of $M$ with $Y\subseteq K\cap L$
such that $M=Y\oplus (X+H)$. Since $L=L\cap M=L\cap (Y\oplus
(X+H))=Y\oplus X+L\cap H$ and $L\cap H$ is PSD in $L$, there is a
projective summand $Y'$ of $L$ with $Y'\subseteq L\cap H$ such that
$L=Y\oplus X\oplus Y'$. Since $L/X\cong Y\oplus Y'$ is projective,
the natural epimorphism $f: K\cap L\rightarrow L/X$ splits, and
hence $Kerf=K\cap X$ is a direct summand of $K\cap L$. Write $K\cap
L=(K\cap X)\oplus Q, Q\leq K\cap L$. So $L=Q\oplus X$, as required.

\end{proof}

\begin{lem} Let $M$ be a $\pi$-projective module. If $N$ and $K$ are
$I$-supplement of each other in $M$, then $N\cap K$ is projective.
If in addition $M$ is projective, then $N$ and $K$ are projective.
\end{lem}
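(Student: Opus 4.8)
The plan is to use the $\pi$-projectivity to produce an endomorphism realizing the decomposition $M=N+K$, to transfer the hypotheses to the constituent submodules, and then to convert the PSD property of $N\cap K$ into actual projectivity once $N\cap K$ has been split off. First I would unpack the hypothesis: $N$ and $K$ being $I$-supplements of each other means $M=N+K$ with $N\cap K\subseteq IN\cap IK$ and $N\cap K$ PSD in both $N$ and $K$. Since $M$ is $\pi$-projective and $M=N+K$, there is $f\in\mathrm{End}(M)$ with $f(M)\subseteq N$ and $(1-f)(M)\subseteq K$. A short computation then shows $f(K)\subseteq N\cap K$ and $(1-f)(N)\subseteq N\cap K$: for $k\in K$ one has $f(k)\in N$ while $f(k)=k-(1-f)(k)\in K$, and dually on $N$. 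Consequently $K=f(K)+(1-f)(K)\subseteq(N\cap K)+(1-f)(K)$, so $(N\cap K)+(1-f)(K)=K$, and symmetrically $(N\cap K)+f(N)=N$.

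Next I would feed these equalities into the PSD hypothesis. Applying the PSD property of $N\cap K$ in $K$ to the submodule $X=(1-f)(K)$ yields a projective direct summand $S$ of $K$ with $S\subseteq N\cap K$ and $K=S\oplus(1-f)(K)$; symmetrically there is a projective summand $S'$ of $N$ with $S'\subseteq N\cap K$ and $N=S'\oplus f(N)$. The clean part of the argument is the final conversion, which I would record first: suppose we already know that $N\cap K$ is a direct summand of $K$, say $K=(N\cap K)\oplus K'$. Then $(N\cap K)+K'=K$, so the PSD property supplies a projective $S\subseteq N\cap K$ with $K=S\oplus K'$; since $S\subseteq N\cap K$ and $(N\cap K)\cap K'=0$, the modular law forces $N\cap K=S\oplus\big((N\cap K)\cap K'\big)=S$, whence $N\cap K$ is projective. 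Thus the whole statement reduces to showing that $N\cap K$ splits off in $K$.

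The main obstacle is exactly this splitting. Here I would return to $\pi$-projectivity to show that the inclusion $N\cap K\hookrightarrow K$ is split. The containments above say that $f$ and $1-f$ restrict to endomorphisms of $N\cap K$ summing to the identity, and I would combine the decomposition $K=S\oplus(1-f)(K)$ with parts (3) and (5) of Lemma~2.2, which push the PSD property down to the summand $(1-f)(K)$ (since $(N\cap K)\cap(1-f)(K)=(1-f)(N\cap K)$ is contained in $N\cap K$, hence PSD in $K$, and $(1-f)(K)$ is a direct summand of $K$). Splitting off this remaining piece and pairing it with the symmetric decomposition $N=S'\oplus f(N)$ should identify $N\cap K$ with a direct sum of the projective cores $S$ and $S'$. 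I expect this to be the only delicate point, since it is the analogue for $I$-supplements of the classical fact that mutual supplements in a $\pi$-projective module constitute a direct summand.

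Finally, for the second assertion, assume in addition that $M$ is projective. Applying the first part in its two symmetric forms gives direct-sum decompositions $K=(N\cap K)\oplus K'$ and $N=(N\cap K)\oplus N'$; using $M=N+K$ one then checks that $M=N'\oplus(N\cap K)\oplus K'$, so that $N=N'\oplus(N\cap K)$ and $K=(N\cap K)\oplus K'$ are direct summands of $M$. Being direct summands of the projective module $M$, both $N$ and $K$ are projective, as claimed.
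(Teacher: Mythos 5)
Your reduction is sound as far as it goes: the use of $\pi$-projectivity to get $K=(N\cap K)+(1-f)(K)$, the application of the PSD property to obtain $K=S\oplus(1-f)(K)$ with $S\subseteq N\cap K$ projective, and the observation that everything would follow once $N\cap K$ is known to be a direct summand of $K$ (the modular-law step $N\cap K=S\oplus\bigl((N\cap K)\cap K'\bigr)=S$ is correct). But the proof has a genuine gap exactly where you flag it: you never establish that $N\cap K$ splits off. What your construction actually yields is $N\cap K=S\oplus(1-f)(N\cap K)$ and, symmetrically, $N\cap K=S'\oplus f(N\cap K)$; nothing prevents $f(N\cap K)=(1-f)(N\cap K)=N\cap K$ with $S=S'=0$, in which case these decompositions carry no information. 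The classical fact you invoke --- that mutual supplements in a $\pi$-projective module are direct summands --- relies on $N\cap K$ being \emph{small} in $K$, which kills the term $N\cap K$ in $K=(N\cap K)+(1-f)(K)$; here $N\cap K$ is only PSD, so that mechanism is unavailable and the ``pairing'' of the two decompositions does not close up. As written, the argument does not prove the lemma.

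The paper sidesteps this entirely by working in the \emph{external} direct sum: $\pi$-projectivity makes the addition map $N\oplus K\to M$ split, so $\operatorname{Ker}f=\{(n,-n)\mid n\in N\cap K\}$ is automatically a direct summand of $N\oplus K$, say $N\oplus K=\operatorname{Ker}f\oplus U$ with $U\cong M$. Since $N\cap K$ is PSD in both $N$ and $K$, Lemma~2.2(4) and (3) make $\operatorname{Ker}f$ PSD in $N\oplus K$, and applying the PSD property to $\operatorname{Ker}f+U=N\oplus K$ produces a projective summand $Y\subseteq\operatorname{Ker}f$ with $N\oplus K=Y\oplus U$; modularity then gives $Y=\operatorname{Ker}f\cong N\cap K$. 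This is the idea your attempt is missing: you need a decomposition in which a copy of $N\cap K$ is already split off \emph{before} invoking PSD, and the split addition map provides it for free. The second assertion also falls out immediately there ($N\oplus K=Y\oplus U$ is projective when $M$ is), whereas your version of it depends on the unproved splitting inside $N$ and $K$. I recommend redoing the proof via the external direct sum.
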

\begin{proof} Let $f: N\oplus K\rightarrow N+K=M$ with $(n,
k)\mapsto n+k$ for $n\in N, k\in K$. Since $M$ is a $\pi$-projective
module, $f$ splits, and so $Kerf=\{(n, -n)| n\in N\cap K\}$ is a
direct summand of $N\oplus K$. Write $N\oplus K=Kerf\oplus U, U\cong
M$. Since $N\cap K$ is PSD in $N$ and $K$, $Kerf$ is PSD in $N\oplus
K$ by Lemma 2.2. Thus there is a projective summand $Y$ of $N\oplus
K$ with $Y\subseteq Kerf$ such that $N\oplus K=Y\oplus U$, so
$Y=Kerf\cong N\cap K$ is projective. If $M$ is projective, $Y\oplus
U$ is projective. So $N$ and $K$ are projective.
\end{proof}

We end this section with the following lemma.

\begin{lem} Let $M=A+B$. If $M/A$ has a projective $I$-cover, then
$B$ contains an $I$-supplement of $A$.
\end{lem}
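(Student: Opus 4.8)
The plan is to lift the given projective $I$-cover along the restriction to $B$ of the canonical projection, and then take the image of the cover as the desired $I$-supplement. Write $\pi: M\to M/A$ for the natural epimorphism. Since $M=A+B$, the restriction $\pi|_{B}: B\to M/A$ is surjective. Let $f: P\to M/A$ be a projective $I$-cover, so that $P$ is projective, $f$ is an epimorphism, $\ker f\subseteq IP$ and $\ker f$ is PSD in $P$. Because $P$ is projective and $\pi|_{B}$ is onto, there is a homomorphism $g: P\to B$ with $\pi|_{B}\circ g=f$; in particular $\pi\circ g=f$ since $g(P)\subseteq B$. I will show that $Y:=g(P)\subseteq B$ is an $I$-supplement of $A$ in $M$.

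First I would check $A+Y=M$: applying $\pi$ gives $\pi(Y)=\pi(g(P))=f(P)=M/A$, and $\pi(Y)=M/A$ is equivalent to $A+Y=M$. The central computation is the identification $A\cap Y=g(\ker f)$. For $p\in P$ one has $g(p)\in A$ iff $\pi(g(p))=0$ iff $f(p)=0$, so that $g^{-1}(A)=\ker f$; since $g: P\to Y$ is surjective, $g(\ker f)=g\bigl(g^{-1}(A)\bigr)=A\cap g(P)=A\cap Y$.

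With this in hand, the two remaining conditions follow formally. From $\ker f\subseteq IP$ I get $A\cap Y=g(\ker f)\subseteq g(IP)\subseteq Ig(P)=IY$. And since $\ker f$ is PSD in $P$ and $g: P\to Y$ is an epimorphism, Lemma 2.2(1) shows that $g(\ker f)=A\cap Y$ is PSD in $Y$. Hence $Y$ satisfies $A+Y=M$, $A\cap Y\subseteq IY$ and $A\cap Y$ PSD in $Y$; that is, $Y$ is an $I$-supplement of $A$ that is contained in $B$, as required.

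I expect the main point to be the correct identification $A\cap Y=g(\ker f)$, which is exactly what converts the two hypotheses on $\ker f$ (inclusion in $IP$ and being PSD in $P$) into the corresponding properties of $A\cap Y$ inside $Y$. The transfer of the PSD property hinges on $g$ being \emph{surjective} onto $Y$, so that Lemma 2.2(1) applies to the epimorphism $g: P\to Y$; everything else is routine diagram chasing, and the projectivity of $P$ enters only to produce the lift $g$.
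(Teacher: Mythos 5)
Your proof is correct and follows essentially the same route as the paper: lift the projective $I$-cover $f:P\to M/A$ through $\pi|_{B}$ to get $g:P\to B$, take $Y=g(P)$, identify $A\cap Y=g(\ker f)$, and transfer the two conditions on $\ker f$ to $A\cap Y$ via Lemma 2.2(1). Your write-up is in fact slightly more careful than the paper's, since it spells out the verification $g^{-1}(A)=\ker f$ underlying the key identity $A\cap g(P)=g(\ker f)$.
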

\begin{proof} Let $\pi: B\rightarrow M/A$ be the canonical
homomorphism and $f: P\rightarrow M/A$ a projective $I$-cover. Since
$P$ is projective, there is a homomorphism $g: P\rightarrow B$ such
that $\pi g=f$. Thus $M=A+g(P)$ and $A\cap g(P)=g(Kerf)$. Since
$Kerf\subseteq IP$ and $Kerf$ is PSD in $P$, $A\cap g(P)\subseteq
Ig(P)$ and $A\cap g(P)$ is PSD in $g(P)$ by Lemma 2.2. So $g(P)$ is
an $I$-supplement of $A$ contained in $B$.
\end{proof}

\section{\bf Characterizations of $I$-semiregular, $I$-semiperfect and $I$-perfect rings in terms of $I$-supplemented modules}

We shall chracterize $I$-semiregular rings, $I$-semiperfect rings
and $I$-perfect rings by $I$-supplemented modules in this section.
We begin this section with the following.

\begin{thm} Let $R$ be a ring and $I$ an ideal of $R$, $P$ a projective
module. Consider the following conditions:
\begin{enumerate}
\item $P$ is an $I$-supplemented module.
\item $P$ is an $I$-semiperfect module.
\end{enumerate}
Then $(1)\Rightarrow (2)$,  and $(2)\Rightarrow (1)$ if $P$ is PSD
for $I$.
\end{thm}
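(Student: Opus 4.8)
The plan is to prove the two implications separately, using the projectivity of $P$ throughout and the PSD hypothesis only for the reverse direction.

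\medskip

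\noindent\textbf{Proof of $(1)\Rightarrow(2)$.} Suppose $P$ is $I$-supplemented and let $K\leq P$ be an arbitrary submodule. By definition there is a submodule $Y\leq P$ with $K+Y=P$, $K\cap Y\subseteq IY$, and $K\cap Y$ PSD in $Y$. Since $P$ is projective and $K+Y=P$, the epimorphism $P\to P/K$ is split by $Y$ in the sense that $Y$ maps onto $P/K$; more precisely I would consider the canonical map $Y\to P/K$ whose kernel is $K\cap Y$. The goal is to produce a decomposition $P=A\oplus B$ with $A$ projective, $A\subseteq K$, and $K\cap B\subseteq IP$. First I would use projectivity of $P$ to split $Y$ off: since $K+Y=P$ with $P$ projective, $Y$ is a direct summand, say $P=Y\oplus C$ with $C\subseteq K$ (the standard argument: the split epimorphism $Y\hookrightarrow P\twoheadrightarrow P/K$ induced from projectivity yields a complement inside $K$). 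Then the PSD condition on $K\cap Y$ in $Y$ gives a projective summand $S$ of $Y$ with $S\subseteq K\cap Y$ splitting off the relevant part, and I would assemble $A$ from $C$ together with $S$ and $B$ as the PSD-complement so that $A\subseteq K$ is a projective direct summand and $K\cap B\subseteq IY\subseteq IP$. The main point is to repackage the supplement data into a direct-sum decomposition, which is exactly what projectivity plus the PSD splitting property afford.

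\medskip

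\noindent\textbf{Proof of $(2)\Rightarrow(1)$ when $P$ is PSD for $I$.} Suppose $P$ is $I$-semiperfect and PSD for $I$. Let $X\leq P$. By the $I$-semiperfect hypothesis there is a decomposition $P=A\oplus B$ with $A$ projective, $A\subseteq X$, and $X\cap B\subseteq IP$. I would take $Y=B$ as the candidate $I$-supplement of $X$. Clearly $X+Y=X+B\supseteq A\oplus B=P$, so $X+Y=P$. Next, $X\cap Y=X\cap B\subseteq IP$; to upgrade this to $X\cap Y\subseteq IY=IB$ I would argue that since $X\cap B\subseteq B$ and the ambient containment is in $IP$, the module-theoretic structure together with $B$ being a direct summand forces $X\cap B\subseteq IB$ (using that $IP\cap B=IB$ when $B$ is a direct summand, as $I$ is an ideal). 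Finally, $X\cap Y=X\cap B\subseteq IP\cap B=IB\subseteq IB$, and here is where the PSD-for-$I$ hypothesis enters: since $B$ is a direct summand of $P$ and $P$ is PSD for $I$, every submodule of $IB\subseteq IP$ is PSD in $P$, hence PSD in $B$ by Lemma 2.2(5). Thus $X\cap Y$ is PSD in $Y$, and $Y=B$ is an $I$-supplement of $X$.

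\medskip

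\noindent\textbf{Main obstacle.} I expect the delicate step to be the identity $IP\cap B=IB$ for a direct summand $B$, and more generally the careful tracking of whether containments land in $IP$ versus $IB$; the PSD-for-$I$ hypothesis is precisely designed to bridge the gap so that the required submodule inherits the PSD property in the summand $B$ via Lemma 2.2(5). In the forward direction, the subtle point is extracting a genuine direct-sum decomposition with the projective summand sitting inside $X$, which relies on combining projectivity of $P$ with the splitting guaranteed by the PSD condition rather than on any smallness of $X\cap Y$.
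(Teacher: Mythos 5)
Your argument for $(2)\Rightarrow(1)$ is essentially the paper's: take $Y=B$, observe $IP\cap B=IB$ because $IP=IA\oplus IB$ refines the decomposition $P=A\oplus B$, and use the PSD-for-$I$ hypothesis together with Lemma 2.2(5) to transfer the PSD property from $P$ to the summand $B$. That half is fine.

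The direction $(1)\Rightarrow(2)$, however, contains a genuine gap, and it sits exactly at the step you lean on hardest: the claim that ``since $K+Y=P$ with $P$ projective, $Y$ is a direct summand, say $P=Y\oplus C$ with $C\subseteq K$.'' This is false. Projectivity of $P$ and $K+Y=P$ give you only a lift $g\colon P\to Y$ with $\pi|_{Y}\,g=\pi$ (where $\pi\colon P\to P/K$), hence $P=K+g(P)$ and, by modularity, $Y=g(P)+(K\cap Y)$; the composite $Y\hookrightarrow P\twoheadrightarrow P/K$ does not split merely because $P$ is projective, and $Y$ need not be a summand of $P$. Applying the PSD condition to $K\cap Y$ inside $Y$ then yields $Y=g(P)\oplus S$ with $S\subseteq K\cap Y$ a projective summand of $Y$ --- so $g(P)$ is a summand of $Y$, not of $P$, and there is no reason yet why $g(P)$ should be projective. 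Producing the projective summand $A\subseteq K$ demanded by the definition of $I$-semiperfect is precisely the hard part, and your sketch never does it. The paper closes this gap by a different route: it shows $g(P)$ is an $I$-supplement of $N$, invokes the hypothesis (1) a \emph{second} time to get an $I$-supplement $Q$ of $g(P)$, uses Lemma 2.9 to make $g(P)$ and $Q$ mutual $I$-supplements, and then Lemma 2.10 (with $P$ projective, hence $\pi$-projective) to conclude $g(P)$ is projective; finally $g(P)\to P/N$ is a projective $I$-cover and $I$-semiperfectness follows from [12, Lemma 2.9]. Without some substitute for this mutual-supplement argument, your ``assembly'' of $A$ from $C$ and $S$ cannot be carried out.
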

\begin{proof} $``(1)\Rightarrow (2)"$ Let $P$ be an $I$-supplemented
module and $N\leq P$. Then there exists $X\leq P$ such that $P=N+X$,
$N\cap X\subseteq IX$ and $N\cap X$ is PSD in $X$. Let $\pi:
P\rightarrow P/N$ and $\pi\mid_{X}: X\rightarrow P/N$ be the
canonical epimorphisms. Since $P$ is projective, there is a
homomorphism $g: P\rightarrow X$ such that $\pi\mid_{X}g=\pi$. We
have $P=g(P)+N$ and $X=g(P)+N\cap X$. Since $N\cap X$ is PSD in $X$,
there is a projective summand $Y$ of $X$ with $Y\subseteq N\cap X$
such that $X=g(P)\oplus Y$. It is easy to verify that $g(P)\cap
N\subseteq Ig(P)$. Since $g(P)\cap N\subseteq N\cap X$ and $N\cap X$
is PSD in $X$, $g(P)\cap N$ is PSD in $X$ by Lemma 2.2, and so
$g(P)\cap N$ is PSD in $g(P)$ by Lemma 2.2. Thus $g(P)$ is an
$I$-supplement of $N$ in $P$. Since $P$ is an $I$-supplemented
module, $g(P)$ has an $I$-supplement $Q$ in $P$. Thus $g(P)$ is also
an $I$-supplement of $Q$ in $P$ by Lemma 2.9, and so $g(P)$ is
projective by Lemma 2.10. Since $g(P)\cap N\subseteq Ig(P)$ and
$g(P)\cap N$ is PSD in $g(P)$, the canonical epimorphism
$g(P)\rightarrow P/N$ is a projective $I$-cover of $P/N$. So $P$ is
an $I$-semiperfect module by [12, Lemma 2.9].

$``(2)\Rightarrow (1)"$ Let $P$ be an $I$-semiperfect module, then
for every submodule $X$ of $P$, there is a decomposition $P=A\oplus
Y$ such that $A$ is projective, $A\subseteq X$ and $X\cap Y\subseteq
IM$. Thus $P=X+Y$,  $X\cap Y\subseteq IY$. Since $P$ is PSD for $I$,
$X\cap Y$ is PSD in $Y$ by Lemma 2.2, as desired.

\end{proof}

By Theorem 3.1, we know that if a module $M$ is projective and PSD
for $I$, then $M$ is an $I$-supplemented module if and only if $M$
is $I$-lifting if and only if $M$ is an $I$-semiperfect module.

\begin{cor} Let $M$ be a projective
module with $Rad(M)\ll M$ ($\delta(M)\ll_{\delta}M$). Then
 $M$ is a ($\delta$-)supplemented module if and only if
 $M$ is a ($\delta$-)semiperfect module if and only if
 $M$ is a ($\delta$-)lifting module.
\end{cor}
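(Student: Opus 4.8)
The plan is to derive all three equivalences from Theorem 3.1 together with the remark following it, taking $I=J(R)$ for the supplemented case and $I=\delta(_{R}R)$ for the $\delta$-supplemented case. Recall that the remark after Theorem 3.1 says that if a projective module $M$ is PSD for $I$, then ``$M$ is $I$-supplemented'', ``$M$ is $I$-lifting'' and ``$M$ is $I$-semiperfect'' are mutually equivalent. So the corollary reduces to two tasks: showing that the hypothesis $Rad(M)\ll M$ (respectively $\delta(M)\ll_{\delta}M$) forces $M$ to be PSD for $J(R)$ (respectively $\delta(_{R}R)$), and then matching the $I$-decorated notions with their classical counterparts.

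For the PSD task I would proceed as follows. Because $M$ is projective, $J(R)M=Rad(M)$, so ``$M$ is PSD for $J(R)$'' means exactly that every submodule of $Rad(M)$ is PSD in $M$. If $N\subseteq Rad(M)$, then $Rad(M)\ll M$ gives $N\ll M$ (a submodule of a small submodule is small), whence $N$ is PSD in $M$ by Proposition 2.3(1); thus $M$ is PSD for $J(R)$. The $\delta$-case is entirely parallel: using $\delta(_{R}R)M=\delta(M)$ for projective $M$, the hypothesis $\delta(M)\ll_{\delta}M$ makes each submodule of $\delta(M)$ a $\delta$-small submodule, hence PSD in $M$ by Proposition 2.3(2), so $M$ is PSD for $\delta(_{R}R)$.

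It then remains to translate. By Theorem 2.6 the projectivity of $M$ identifies $J(R)$-supplemented with supplemented (and $\delta(_{R}R)$-supplemented with $\delta$-supplemented), and by Theorem 2.8 it identifies $J(R)$-lifting with lifting (and $\delta(_{R}R)$-lifting with $\delta$-lifting); the semiperfect (respectively $\delta$-semiperfect) notion coincides with the $J(R)$-semiperfect (respectively $\delta(_{R}R)$-semiperfect) one, just as recorded for rings in the introduction. Feeding these identifications into the equivalence furnished by Theorem 3.1 and the first stage yields the asserted chain of equivalences. The only delicate point is the verification that $M$ is PSD for $I$; once the standard identity $J(R)M=Rad(M)$ (and its $\delta$-analogue $\delta(_{R}R)M=\delta(M)$) for projective modules is granted, the rest is a direct assembly of the already-established results, so I anticipate no serious obstacle beyond carrying the ordinary and $\delta$-decorated versions in parallel.
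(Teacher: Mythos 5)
Your proposal is correct and follows exactly the route the paper intends: the corollary is meant to fall out of Theorem 3.1 together with the remark following it, once one checks that $Rad(M)\ll M$ (resp.\ $\delta(M)\ll_{\delta}M$) and the identity $J(R)M=Rad(M)$ (resp.\ $\delta(_{R}R)M=\delta(M)$) for projective $M$ make $M$ PSD for the relevant ideal via Proposition 2.3, and then translates back using Theorems 2.6 and 2.8. Your write-up supplies the details the paper leaves implicit, and I see no gap.
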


\begin{thm} Let $I$ be an ideal of $R$. Consider the following
conditions:
\begin{enumerate}
\item Every finitely generated $R$-module is $I$-supplemented.
\item Every finitely generated projective $R$-module is
$I$-supplemented.
\item Every finitely generated projective $R$-module is $I$-lifting.
\item $_{R}R$ is an $I$-lifting.

\item $_{R}R$ is an $I$-supplemented.

\item $R$ is $I$-semiperfect.

\end{enumerate}
Then $(1)\Rightarrow (2)\Rightarrow (5)\Rightarrow (6)$ and
$(3)\Rightarrow (4)\Rightarrow (5)$; $(2)\Rightarrow (3)$ and
$(6)\Rightarrow (1)$ if $R$ is a left PSD ring for $I$.

\end{thm}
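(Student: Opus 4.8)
The plan is to dispatch the ``cheap'' arrows first and then concentrate on the two implications that carry the PSD hypothesis, where the real work lies. The implications $(1)\Rightarrow(2)$ and $(3)\Rightarrow(4)$ are immediate, since every finitely generated projective module is a finitely generated module and $_{R}R$ is itself finitely generated projective. For $(4)\Rightarrow(5)$ I would record the general fact that an $I$-lifting module is $I$-supplemented: given a submodule $X$, take the decomposition $M=A\oplus B$ with $A\subseteq X$, $X\cap B\subseteq IB$ and $X\cap B$ PSD in $B$; then $X+B\supseteq A+B=M$, so $B$ is an $I$-supplement of $X$. For $(2)\Rightarrow(5)$, note that $_{R}R$ is finitely generated projective, hence $I$-supplemented by hypothesis. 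For $(5)\Rightarrow(6)$, apply the implication $(1)\Rightarrow(2)$ of Theorem 3.1 to the projective module $P={}_{R}R$: since $_{R}R$ is $I$-supplemented it is $I$-semiperfect, i.e.\ $R$ is $I$-semiperfect; note this direction of Theorem 3.1 needs no PSD assumption.

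For $(2)\Rightarrow(3)$ I would first observe that, when $R$ is left PSD for $I$, every finitely generated projective module $P$ is PSD for $I$: writing $F=P\oplus P'$ with $F$ finitely generated free and taking any $N\leq IP\subseteq IF$, the module $N$ is PSD in $F$ because $F$ is PSD for $I$, hence PSD in $P$ by Lemma 2.2(5). Thus each finitely generated projective $P$ is both projective and PSD for $I$, and the remark following Theorem 3.1 gives that such a $P$ is $I$-supplemented if and only if it is $I$-lifting; combining this with $(2)$ yields $(3)$.

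The substantial implication is $(6)\Rightarrow(1)$ under the PSD hypothesis. Let $M$ be finitely generated and $X\leq M$. Writing the trivial decomposition $M=X+M$, Lemma 2.11 shows that $M$ contains an $I$-supplement of $X$ as soon as $M/X$ has a projective $I$-cover; since $X$ is arbitrary, producing such covers for every quotient of $M$ proves that $M$ is $I$-supplemented. It therefore suffices to prove that over an $I$-semiperfect ring every finitely generated module $N$ (here $N=M/X$) admits a projective $I$-cover, that is, an epimorphism $f\colon P\to N$ with $P$ projective, $\ker f\subseteq IP$ and $\ker f$ PSD in $P$. I would obtain this by first handling cyclic modules directly from the defining decomposition of $_{R}R$: for a left ideal $K$, $I$-semiperfectness gives $R=A\oplus B$ with $A\subseteq K$ projective and $K\cap B\subseteq IR$, so the restriction $B\to R/K$ is onto with kernel $K\cap B$, and the PSD hypothesis on $_{R}R$ together with Lemma 2.2 upgrades this kernel to a PSD submodule of $B$ contained in $IB$, giving a projective $I$-cover of $R/K$. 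One then passes from cyclic to arbitrary finitely generated modules by the standard direct-sum and lifting argument, or simply invokes the corresponding characterization of $I$-semiperfect rings in [15].

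I expect this last step to be the main obstacle: reconciling the kernel condition produced by $I$-semiperfectness, phrased with $IM$ (equivalently $IR$), against the conditions $\ker f\subseteq IP$ and PSD-ness required of a genuine projective $I$-cover. This reconciliation is precisely what the left-PSD-for-$I$ hypothesis is designed to supply via Lemma 2.2, and it is also the reason the converse arrows $(2)\Rightarrow(3)$ and $(6)\Rightarrow(1)$ require that hypothesis while the forward chain does not.
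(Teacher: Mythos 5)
Your proposal is correct and follows essentially the same route as the paper: the easy arrows, Theorem~3.1 (with the PSD-for-$I$ observation for finitely generated projectives) for $(5)\Rightarrow(6)$ and $(2)\Rightarrow(3)$, and Lemma~2.11 combined with the existence of projective $I$-covers over an $I$-semiperfect ring for $(6)\Rightarrow(1)$. The only difference is that where you sketch the construction of projective $I$-covers of finitely generated modules, the paper simply cites [12, Theorem~2.13] and [12, Corollary~2.4].
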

\begin{proof} $``(1)\Rightarrow (2)\Rightarrow (5)"$ and
$``(3)\Rightarrow (4)\Rightarrow (5)"$ are clear.

$``(5)\Rightarrow (6)"$ By Theorem 3.1.

If $R$ is a left PSD ring for $I$, then $(2)\Rightarrow (3)$ is
obvious by Theorem 3.1 and [12, Corollary 2.4].

$``(6)\Rightarrow (1)"$ Let $M$ be a finitely generated module and
$N\leq M$. Then $M=N+M$ and $M/N$ has a projective $I$-cover by [12,
Theorem 2.13], so $M$ contains an $I$-supplement of $N$ by Lemma
2.11. Hence $M$ is $I$-supplemented.
\end{proof}

Let $I=J(R)$ or $\delta(_{R}R)$ in Theorem 3.3, since $R$ is a left
PSD ring, we have the following.

\begin{cor} ([6, Theorem 4.41]) The following statements are equivalent for a ring $R$.
\begin{enumerate}
\item $R$ is semiperfect.
\item Every finitely generated $R$-module is supplemented.
\item Every finitely generated projective $R$-module is
supplemented.
\item Every finitely generated projective $R$-module is lifting.
\item $_{R}R$ is lifting.

\item $_{R}R$ is supplemented.
\end{enumerate}
\end{cor}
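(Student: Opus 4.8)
The plan is to specialize Theorem 3.3 to the ideal $I = J(R)$ and then translate each $J(R)$-condition into its classical counterpart. The first observation is that $R$ is always a left PSD ring for $J(R)$; this is precisely the remark preceding the statement, and it guarantees that the two conditional implications $(2) \Rightarrow (3)$ and $(6) \Rightarrow (1)$ of Theorem 3.3 hold without additional hypothesis. Consequently all six conditions of Theorem 3.3, read with $I = J(R)$, are equivalent: every finitely generated module is $J(R)$-supplemented, every finitely generated projective module is $J(R)$-supplemented, every finitely generated projective module is $J(R)$-lifting, $_{R}R$ is $J(R)$-lifting, $_{R}R$ is $J(R)$-supplemented, and $R$ is $J(R)$-semiperfect.

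Next I would match these up with conditions (1)--(6) of the corollary. Condition (1), that $R$ is semiperfect, is equivalent to $R$ being $J(R)$-semiperfect by the observation recorded in the Introduction, so it corresponds to condition (6) of Theorem 3.3. For the conditions involving projective modules---namely (3), (4), (5), and (6) of the corollary---projectivity lets me invoke Theorems 2.6 and 2.8 directly: for a projective module, being $J(R)$-supplemented is equivalent to being supplemented (Theorem 2.6) and being $J(R)$-lifting is equivalent to being lifting (Theorem 2.8). Since $_{R}R$ and every finitely generated projective module are projective, conditions (3)--(6) of the corollary translate exactly into conditions (2)--(5) of Theorem 3.3.

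The delicate point is condition (2) of the corollary, that every finitely generated module is supplemented, because the modules here need not be projective and Theorem 2.6 only gives the equivalence of supplemented and $J(R)$-supplemented unconditionally in the projective case. Here I would use the remark following Theorem 2.8: once $R$ is known to be left semiperfect, one has $J(R)M = Rad(M)$ for every module $M$ over $R$, whence supplemented and $J(R)$-supplemented coincide for all finitely generated modules. Since semiperfectness of $R$ is already part of the equivalence cycle established above, this identification is legitimate, and condition (2) of the corollary then matches condition (1) of Theorem 3.3. Assembling the six translations yields the desired chain of equivalences, recovering the classical characterization [6, Theorem 4.41]. The main obstacle is exactly this non-projective case, which is resolved by the semiperfect hypothesis forcing $J(R)M = Rad(M)$.
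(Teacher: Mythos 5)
Your proposal is correct and follows essentially the same route as the paper, which simply specializes Theorem 3.3 to $I=J(R)$, invokes the fact that $R$ is a left PSD ring for $J(R)$, and relies on Theorems 2.6 and 2.8 to translate between the $J(R)$-conditions and the classical ones. One small remark on your ``delicate point'': for the direction from condition (2) of the corollary into the equivalence cycle you do not yet know $R$ is semiperfect, so rather than appealing to $J(R)M=Rad(M)$ there, just observe that (2) trivially implies (3) by restricting to projective modules; the semiperfect identity is only needed (and is legitimately available) for the converse direction, from the cycle back to (2).
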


\begin{cor} ([5, Theorem 3.3]) The following statements are equivalent for a ring $R$.
\begin{enumerate}
\item $R$ is $\delta$-semiperfect.
\item Every finitely generated $R$-module is $\delta$-supplemented.
\item Every finitely generated projective $R$-module is
$\delta$-supplemented.
\item Every finitely generated projective $R$-module is $\delta$-lifting.
\item $_{R}R$ is $\delta$-lifting.

\item $_{R}R$ is $\delta$-supplemented.
\end{enumerate}
\end{cor}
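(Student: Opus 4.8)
The plan is to obtain this corollary by specializing Theorem 3.3 to the ideal $I=\delta(_{R}R)$ and then translating each $\delta(_{R}R)$-notion into the corresponding $\delta$-notion. Reading the six statements of Theorem 3.3 with $I=\delta(_{R}R)$, they become, in order, conditions $(2),(3),(4),(5),(6),(1)$ of the corollary, so once the two-way translation is justified the asserted equivalences are just a relabelling of what Theorem 3.3 already proves.

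The enabling step, and the one that makes Theorem 3.3 applicable in full, is the verification that $R$ is a left PSD ring for $I=\delta(_{R}R)$: in Theorem 3.3 the implications $(2)\Rightarrow(3)$ and $(6)\Rightarrow(1)$ are stated only under this hypothesis, so without it the six conditions need not be equivalent. To see it, let $F$ be a finitely generated free module. Then $IF=\delta(F)$ (the $\delta$-analogue of $J(R)F=Rad(F)$, using that $\delta$ commutes with direct sums) and, $F$ being finitely generated, $\delta(F)\ll_{\delta}F$; hence any submodule $N\subseteq IF$ is $\delta$-small in $F$, so $N\subseteq\delta(F)$ and $N$ is PSD in $F$ by Proposition 2.3(2). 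Thus $F$ is PSD for $I$, $R$ is left PSD for $I$, and all six conditions of Theorem 3.3 with $I=\delta(_{R}R)$ are equivalent.

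It remains to pass between the $\delta(_{R}R)$-versions and the $\delta$-versions. The identification of $\delta(_{R}R)$-semiperfect rings with $\delta$-semiperfect rings is recorded in the preliminaries, which settles the translation of Theorem 3.3$(6)$ into corollary condition $(1)$. For corollary conditions $(3),(4),(5),(6)$ the modules involved are $_{R}R$ or finitely generated projective modules, hence projective, so the projective branch of Theorem 2.6 (for the supplemented statements $(3)$ and $(6)$) and of Theorem 2.8 (for the lifting statements $(4)$ and $(5)$) yields a genuine equivalence $\delta(_{R}R)$-X $\Leftrightarrow$ $\delta$-X in each case.

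The only condition that escapes this direct argument, and hence the point requiring the most care, is condition $(2)$ of the corollary, which ranges over \emph{all} finitely generated modules and not merely projective ones, so the converse half of Theorem 2.6 is not automatic there. I would close this gap as follows: the implication Theorem 3.3$(1)\Rightarrow$ corollary $(2)$ is the always-valid direction ``$\delta(_{R}R)$-supplemented $\Rightarrow$ $\delta$-supplemented'' of Theorem 2.6, applied to each finitely generated module; conversely, corollary $(2)$ implies corollary $(6)$ trivially, since $_{R}R$ is itself a finitely generated module. As corollary $(6)$ has already been tied into the equivalence class, condition $(2)$ joins it, completing the cycle. Alternatively, one may invoke the remark following Theorem 2.8: any of the equivalent conditions forces $R$ to be $\delta$-semiperfect, whence $\delta(R)M=\delta(M)$ for every module $M$, and the converse half of Theorem 2.6 becomes available for all finitely generated $M$.
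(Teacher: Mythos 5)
Your proposal is correct and follows exactly the route the paper intends: specialize Theorem 3.3 to $I=\delta(_{R}R)$, observe that $R$ is a left PSD ring for this ideal, and translate each $\delta(_{R}R)$-condition into its $\delta$-counterpart via Theorems 2.6--2.8 (the paper compresses all of this into the one-line remark preceding the corollary). Your extra care with condition $(2)$ --- closing the cycle through the trivial implication $(2)\Rightarrow(6)$ rather than forcing a two-way translation for non-projective modules --- is a detail the paper leaves implicit but is handled correctly.
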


Since if $R$ is $Z(_{R}R)$-semiregular, then $Z(_{R}R)=J(R)\subseteq
\delta(_{R}R)$, we have the following result.

\begin{cor} The following statements are equivalent for a ring $R$.
\begin{enumerate}
\item $R$ is $Z(_{R}R)$-semiperfect.
\item Every finitely generated $R$-module is $Z(_{R}R)$-supplemented.
\item Every finitely generated projective $R$-module is
$Z(_{R}R)$-supplemented.
\item Every finitely generated projective $R$-module is $Z(_{R}R)$-lifting.
\item $_{R}R$ is $Z(_{R}R)$-lifting.

\item $_{R}R$ is $Z(_{R}R)$-supplemented.
\end{enumerate}
\end{cor}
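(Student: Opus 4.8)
The plan is to deduce Corollary 3.6 directly from Theorem 3.3 by specializing the ideal $I$ to $Z({}_{R}R)$, exactly as Corollaries 3.4 and 3.5 specialize it to $J(R)$ and $\delta({}_{R}R)$. The single obstacle is the hypothesis of Theorem 3.3: the implications $(2)\Rightarrow(3)$ and $(6)\Rightarrow(1)$ require that $R$ be a left PSD ring for $I$, so to get the full equivalence of all six statements I must first verify that $R$ is a left PSD ring for $I=Z({}_{R}R)$.

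First I would invoke the remark preceding the corollary, namely that if $R$ is $Z({}_{R}R)$-semiregular then $Z({}_{R}R)=J(R)\subseteq\delta({}_{R}R)$. Under this identification $I=Z({}_{R}R)$ coincides with $J(R)$, and since $R$ is always a left PSD ring for $J(R)$ (this is the fact used in passing from Theorem 3.3 to Corollary 3.4, where it is simply asserted that ``$R$ is a left PSD ring''), the hypothesis of Theorem 3.3 is met. Thus every one of the six implications listed in Theorem 3.3 is available without the conditional clause, and the chain $(1)\Rightarrow(2)\Rightarrow(5)\Rightarrow(6)$ together with $(3)\Rightarrow(4)\Rightarrow(5)$, $(2)\Rightarrow(3)$, and $(6)\Rightarrow(1)$ closes up into a full cycle showing all six statements equivalent.

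Concretely, I would write: by Theorem 3.3 we have the implications $(1)\Rightarrow(2)\Rightarrow(5)\Rightarrow(6)$ and $(3)\Rightarrow(4)\Rightarrow(5)$ unconditionally, while $(2)\Rightarrow(3)$ and $(6)\Rightarrow(1)$ hold because $R$ is a left PSD ring for $I=Z({}_{R}R)=J(R)$. Assembling these, $(1)\Rightarrow(2)\Rightarrow(3)\Rightarrow(4)\Rightarrow(5)\Rightarrow(6)\Rightarrow(1)$, so the six conditions are equivalent, with the $I$-supplemented, $I$-lifting, and $I$-semiperfect notions all instantiated at $I=Z({}_{R}R)$.

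The step I expect to carry the real content is the identification $Z({}_{R}R)=J(R)$ under the standing assumption, since this is what collapses the $Z({}_{R}R)$-theory onto the classical $J(R)$-theory and simultaneously supplies the PSD hypothesis; once that identification is in place the corollary is a direct reading of Theorem 3.3. I would flag that the corollary implicitly assumes $R$ is $Z({}_{R}R)$-semiregular (or at least that $Z({}_{R}R)=J(R)$), as the preceding sentence in the excerpt makes clear, and I would state this so that the specialization of the PSD hypothesis is justified rather than merely asserted.
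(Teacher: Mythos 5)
Your proposal is correct and follows essentially the same route as the paper, which justifies Corollary 3.6 solely by the remark that $Z({}_RR)$-semiregularity forces $Z({}_RR)=J(R)\subseteq\delta({}_RR)$, so that $R$ is a left PSD ring for $I=Z({}_RR)$ and Theorem 3.3 applies in full. Your added observation that the semiregularity hypothesis must first be extracted from whichever of the six conditions is assumed (via the unconditional implications down to $I$-semiperfectness) is a point the paper leaves implicit, but it is the same argument.
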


Since if $I\leq Soc(_{R}R)$, then $R$ is a left PSD ring for $I$,
and hence we have

\begin{cor} The following statements are equivalent for a ring $R$.
\begin{enumerate}
\item $R$ is $Soc(_{R}R)$-semiperfect.
\item Every finitely generated $R$-module is $Soc(_{R}R)$-supplemented.
\item Every finitely generated projective $R$-module is
$Soc(_{R}R)$-supplemented.
\item Every finitely generated projective $R$-module is $Soc(_{R}R)$-lifting.
\item $_{R}R$ is $Soc(_{R}R)$-lifting.

\item $_{R}R$ is $Soc(_{R}R)$-supplemented.
\end{enumerate}
\end{cor}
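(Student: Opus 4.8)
The plan is to derive this corollary by specializing Theorem 3.3 to the ideal $I=Soc(_{R}R)$. Once this identification is made, the six listed statements are exactly the six conditions of Theorem 3.3, read off in a permuted order, so the entire task reduces to confirming that the hypotheses under which Theorem 3.3 yields a full equivalence are satisfied for this particular $I$.

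The only substantive point is to check that $R$ is a left PSD ring for $I=Soc(_{R}R)$. This is precisely the remark recorded immediately before the statement: if $I\leq Soc(_{R}R)$, then $R$ is a left PSD ring for $I$. Since trivially $Soc(_{R}R)\leq Soc(_{R}R)$, the hypothesis holds for our choice of $I$. Consequently the two implications of Theorem 3.3 that require the PSD hypothesis, namely $(2)\Rightarrow(3)$ and $(6)\Rightarrow(1)$ in the theorem's numbering, become available alongside the unconditional ones $(1)\Rightarrow(2)\Rightarrow(5)\Rightarrow(6)$ and $(3)\Rightarrow(4)\Rightarrow(5)$.

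It then remains to match the labels and observe that the implications close into a single cycle. Writing $I=Soc(_{R}R)$, condition (1) of the corollary is condition (6) of Theorem 3.3, (2) is (1), (3) is (2), (4) is (3), (5) is (4), and (6) is (5). Under this dictionary the theorem's implications translate into the chain $(1)\Rightarrow(2)\Rightarrow(3)\Rightarrow(4)\Rightarrow(5)\Rightarrow(6)\Rightarrow(1)$ in the corollary's numbering, which delivers the asserted equivalence of all six conditions. I anticipate no genuine obstacle: the whole corollary is a transcription of Theorem 3.3, and the single nontrivial verification, that $R$ is PSD for $Soc(_{R}R)$, is immediate from $Soc(_{R}R)\leq Soc(_{R}R)$.
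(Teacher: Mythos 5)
Your proposal is correct and matches the paper exactly: the paper derives this corollary from Theorem 3.3 with $I=Soc(_{R}R)$ via the remark that $I\leq Soc(_{R}R)$ makes $R$ a left PSD ring for $I$, which is precisely your argument. Your explicit relabelling and closing of the implication cycle is just a more careful writing-out of the same reasoning.
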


\begin{thm} Let $R$ be a left PSD ring and $I$ an ideal of $R$. Then $R$ is an
$I$-semiregular ring if and only if $_{R}R$ is a finitely
$I$-supplemented module if and only if $R_{R}$ is a finitely
$I$-supplemented module.
\end{thm}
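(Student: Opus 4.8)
The plan is to establish the single equivalence ``$R$ is $I$-semiregular $\iff$ $_{R}R$ is finitely $I$-supplemented'' and then obtain the third condition for free from the left--right symmetry of $I$-semiregularity recorded in the Introduction: applying the proven equivalence to the opposite ring $R^{op}$ (for which $I$ is still a two-sided ideal) turns the statement about $_{R}R$ into the statement about $R_{R}$, so all three conditions coincide. The one external fact I would lean on is the characterization of $I$-semiregular rings via projective $I$-covers, namely that $R$ is $I$-semiregular exactly when $R/X$ admits a projective $I$-cover for every \emph{finitely generated} left ideal $X$. This is the semiregular analogue of the semiperfect statement [12, Theorem 2.13] that supplied the projective $I$-cover in the proof of Theorem 3.3, and granting it both implications reduce to the Section~2 machinery.

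For the forward implication, suppose $R$ is $I$-semiregular and let $X$ be a finitely generated left ideal. By the cited characterization $R/X$ has a projective $I$-cover, so writing the trivial equality $R=X+R$ and applying Lemma~2.11 with $A=X$ and $B=R$, I get that $R$ contains an $I$-supplement of $X$. Since $X$ was arbitrary among finitely generated submodules, $_{R}R$ is finitely $I$-supplemented. (A more explicit route avoiding the cover would instead take the $I$-semiregular decomposition $R=A\oplus B$ with $A\subseteq X$ projective and $X\cap B\subseteq IR$, and verify that $Y=B$ is an $I$-supplement: here $X\cap B\subseteq I\cap B=IB$ because $B=Re$ is a summand, and $X\cap B$, lying in $IR$, is PSD in $_{R}R$ since $R$ is a left PSD ring for $I$, hence PSD in $B$ by Lemma~2.2(5).)

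For the converse, assume $_{R}R$ is finitely $I$-supplemented and fix a finitely generated left ideal $X$; I would reconstruct a projective $I$-cover of $R/X$ and then invoke the characterization to conclude $I$-semiregularity. Choose an $I$-supplement $Y$ of $X$ and run the argument of Theorem~3.1$(1)\Rightarrow(2)$ verbatim: projectivity of $R$ lets me lift the canonical map $\pi\colon R\to R/X$ through $\pi|_{Y}$ to a map $g\colon R\to Y$ with $R=g(R)+X$, $g(R)\cap X\subseteq Ig(R)$, and $g(R)\cap X$ PSD in $g(R)$, so $g(R)$ is an $I$-supplement of $X$. The decisive adaptation is that $g(R)=Rg(1)$ is \emph{cyclic}, hence finitely generated, so the finitely $I$-supplemented hypothesis still applies to it: $g(R)$ has an $I$-supplement $Q$, Lemma~2.9 makes $g(R)$ an $I$-supplement of $Q$ in turn, and Lemma~2.10 (using that $R$ is projective, so $\pi$-projective) forces $g(R)$ to be projective. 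The canonical epimorphism $g(R)\to R/X$ then has kernel $g(R)\cap X\subseteq Ig(R)$ which is PSD in $g(R)$, i.e. it is a projective $I$-cover of $R/X$, as desired.

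I expect the main obstacle to be not either implication separately but securing the bridging characterization ``$I$-semiregular $\iff$ $R/X$ has a projective $I$-cover for all finitely generated $X$'' in precisely the PSD-compatible form used above; this is exactly where the hypothesis that $R$ is a left PSD ring for $I$ does its work, as in Theorems~3.1 and 3.3. A secondary point needing care in the converse is that the lifting step yields a \emph{cyclic} supplement $g(R)$, for otherwise the finitely $I$-supplemented hypothesis would not reach it and Lemmas~2.9 and 2.10 could not be applied to promote $g(R)$ to a projective module.
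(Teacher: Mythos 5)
Your proposal is correct and follows essentially the same route as the paper, whose entire proof is ``Similar to Theorem 3.3'': the same cycle through the projective $I$-cover characterization of $I$-semiregularity from [12], Lemma 2.11 for one implication, and the Theorem 3.1 lifting argument for the other, with the key observation that $g(R)=Rg(1)$ is cyclic so the finitely $I$-supplemented hypothesis still reaches it. Your appeal to left--right symmetry of $I$-semiregularity for the $R_{R}$ clause is also what the paper intends, though strictly that step needs $R$ to be a right PSD ring for $I$ as well --- an imprecision already present in the theorem's stated hypothesis rather than a defect of your argument.
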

\begin{proof} Similar to Theorem 3.3.
\end{proof}

\begin{cor} ([11, Proposition 19.1]) The following statements are equivalent for a ring $R$.
\begin{enumerate}
\item $R$ is  semiregular.
\item $_{R}R$ is a finitely  supplemented module.
\item $R_{R}$ is a finitely  supplemented module.
\end{enumerate}
\end{cor}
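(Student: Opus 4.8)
The plan is to obtain this corollary by specializing the preceding result, Theorem 3.8, to the ideal $I=J(R)$ and then translating each of the three $J(R)$-decorated conditions back into its classical form. That theorem already supplies the essential equivalences: once $R$ is known to be a left PSD ring, it asserts that $R$ is $I$-semiregular exactly when ${}_{R}R$ is a finitely $I$-supplemented module, equivalently when $R_{R}$ is one. Thus the real content reduces to two routine verifications: that $R$ is a left PSD ring for $J(R)$, and that the $J(R)$-versions of ``semiregular'' and ``finitely supplemented'' coincide with the ordinary ones.

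First I would check that $R$ is a left PSD ring for $J(R)$. For any finitely generated free left module $F$ one has $J(R)F=Rad(F)$, and $Rad(F)\ll F$ because $F$ is finitely generated; hence every submodule $N$ of $J(R)F$ is small in $F$. By Proposition 2.3(1) a small submodule is automatically PSD, so every submodule of $J(R)F$ is PSD in $F$, that is, $F$ is PSD for $J(R)$. Since $F$ was an arbitrary finitely generated free module, $R$ is a left PSD ring for $J(R)$; this is exactly the remark recorded just before Corollary 3.4.

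With this hypothesis in force, Theorem 3.8 applied with $I=J(R)$ gives the chain of equivalences: $R$ is $J(R)$-semiregular if and only if ${}_{R}R$ is finitely $J(R)$-supplemented if and only if $R_{R}$ is finitely $J(R)$-supplemented. It then remains only to identify these with conditions (1)--(3). By the definitional remark in the introduction, $R$ is $J(R)$-semiregular if and only if $R$ is semiregular, which is condition (1). Since ${}_{R}R$ is projective, Theorem 2.7 yields that ${}_{R}R$ is finitely $J(R)$-supplemented if and only if it is finitely supplemented, which is condition (2); applying the same theorem on the right, where $R_{R}$ is likewise projective, gives condition (3).

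The only delicate point is the left--right bookkeeping, since Theorem 3.8 is phrased for a left PSD ring yet its conclusion involves both ${}_{R}R$ and $R_{R}$, and Theorem 2.7 must be invoked on the right side as well. This causes no genuine difficulty: $I$-semiregularity is left--right symmetric, as noted in the introduction, and the smallness/PSD arguments establishing the PSD-ring property dualize verbatim. Stringing the three equivalences together then yields the stated corollary.
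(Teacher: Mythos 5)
Your proposal is correct and follows exactly the route the paper intends: the corollary is obtained by setting $I=J(R)$ in Theorem 3.8, observing (via Proposition 2.3(1) and $Rad(F)=J(R)F\ll F$ for finitely generated free $F$) that $R$ is a left PSD ring for $J(R)$, and translating the $J(R)$-conditions back to the classical ones using the remarks in the introduction and Theorem 2.7. Your write-up simply makes explicit the routine verifications the paper leaves implicit.
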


\begin{cor} The following statements are equivalent for a ring $R$.
\begin{enumerate}
\item $R$ is $\delta$-semiregular.
\item $_{R}R$ is a finitely $\delta$-supplemented module.
\item $R_{R}$ is a finitely $\delta$-supplemented module.
\end{enumerate}
\end{cor}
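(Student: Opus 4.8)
The plan is to obtain this as a specialization of Theorem 3.9, taking the ideal $I$ to be $\delta(_{R}R)$ for the left-hand characterization and $\delta(R_R)$ for the right-hand one, and then rewriting the resulting $\delta(_{R}R)$- and $\delta(R_R)$-supplemented conditions as ordinary $\delta$-supplemented conditions by means of Theorem 2.7. I will use two facts recorded in the introduction: that $\delta$-semiregularity is a left-right symmetric property, and that $R$ is $\delta$-semiregular exactly when it is $\delta(_{R}R)$-semiregular (equivalently, when it is $\delta(R_R)$-semiregular).

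For $(1)\Leftrightarrow(2)$ I would argue along the chain $R$ is $\delta$-semiregular $\Leftrightarrow$ $R$ is $\delta(_{R}R)$-semiregular (introductory note) $\Leftrightarrow$ $_{R}R$ is finitely $\delta(_{R}R)$-supplemented (Theorem 3.9 with $I=\delta(_{R}R)$, which applies since $R$ is a left PSD ring for $\delta(_{R}R)$, as already used before Corollary 3.4) $\Leftrightarrow$ $_{R}R$ is finitely $\delta$-supplemented (Theorem 2.7, using that $_{R}R$ is projective). The equivalence $(1)\Leftrightarrow(3)$ runs through the mirror-image chain on the right, using the right-hand analog of Theorem 3.9 with $I=\delta(R_R)$ (valid because $R$ is a right PSD ring for $\delta(R_R)$), the right-hand form of the introductory note, and the right-module version of Theorem 2.7 applied to the projective module $R_R$; left-right symmetry of $\delta$-semiregularity then closes the loop back to condition (1).

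The step demanding the most care is the left-right bookkeeping. Theorem 3.9 is stated for a single two-sided ideal $I$, so substituting $I=\delta(_{R}R)$ would only yield that $R_R$ is finitely $\delta(_{R}R)$-supplemented, and one cannot in general replace $\delta(_{R}R)$ by the right radical $\delta(R_R)$, since these two ideals need not coincide. The way around this is not to assume any such equality but to feed $\delta(R_R)$ into the right-hand version of Theorem 3.9 from the outset, letting the symmetry of $\delta$-semiregular rings, rather than an equality of ideals, reconnect the right-hand chain to condition (1). Everything else is a routine transcription of the arguments already given for Theorem 3.9 and its corollaries.
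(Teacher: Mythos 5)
Your proposal is correct and follows exactly the route the paper intends: the corollary is stated without proof as the specialization $I=\delta(_{R}R)$ (resp.\ $\delta(R_{R})$) of Theorem 3.9 (using that $R$ is a left PSD ring for $\delta(_{R}R)$), translated back into ordinary finitely $\delta$-supplemented conditions via Theorem 2.7 applied to the projective module $_{R}R$ (resp.\ its right-module analogue), which is precisely what you do. The only caveat is that the left--right symmetry of $\delta$-semiregularity that closes your right-hand chain is not literally ``recorded in the introduction'' --- the introduction only asserts symmetry of $I$-semiregularity for a fixed two-sided ideal $I$ --- so that fact must be imported from Zhou [16]/Yousif--Zhou [15]; but the paper's own implicit derivation of statement (3) needs exactly the same input, since one cannot in general identify $\delta(_{R}R)$ with $\delta(R_{R})$.
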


\begin{cor} A ring $R$ is $Soc(_{R}R)$-semiregular if and
only if $_{R}R$ is a  finitely $Soc(_{R}R)$-supplemented module if
and only if $R_{R}$ is a finitely $Soc(R_{R})$-supplemented module.
\end{cor}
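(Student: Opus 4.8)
The plan is to deduce this from Theorem~3.8 in the same way that Corollaries~3.9 and 3.10 were obtained, the only new feature being the bookkeeping of left versus right socles. First I would verify the hypothesis of Theorem~3.8 for the ideal $I=Soc(_{R}R)$. Since $Soc(_{R}R)\subseteq Soc(_{R}R)$ trivially, the remark preceding Corollary~3.7 shows that $R$ is a left PSD ring for $Soc(_{R}R)$. Theorem~3.8 then applies verbatim and gives that $R$ is $Soc(_{R}R)$-semiregular if and only if $_{R}R$ is finitely $Soc(_{R}R)$-supplemented if and only if $R_{R}$ is finitely $Soc(_{R}R)$-supplemented; in particular the first two asserted conditions are already equivalent.

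It remains to replace $Soc(_{R}R)$ by the right socle $Soc(R_{R})$ in the statement about $R_{R}$. Here I would pass to the opposite ring $R^{op}$, whose left socle is precisely $Soc(R_{R})$. Again $Soc(R_{R})\subseteq Soc(R_{R})$, so $R^{op}$ is a left PSD ring for its own left socle, and Theorem~3.8 applied to $R^{op}$ yields that $R^{op}$ is $Soc(R_{R})$-semiregular if and only if the left $R^{op}$-module $R^{op}$, that is the right $R$-module $R_{R}$, is finitely $Soc(R_{R})$-supplemented. Thus the third asserted condition is equivalent to the statement that $R$ is $Soc(R_{R})$-semiregular.

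To close the loop I would invoke the left--right symmetry of $I$-semiregular rings recorded in the Introduction, which allows semiregularity to be read as a two-sided property of the pair $(R,I)$. The main obstacle is that the two applications of Theorem~3.8 use different ideals, $Soc(_{R}R)$ on the left and $Soc(R_{R})$ on the right, so symmetry by itself does not identify ``$R$ is $Soc(_{R}R)$-semiregular'' with ``$R$ is $Soc(R_{R})$-semiregular.'' I expect to settle this by proving that a ring which is semiregular relative to either socle must have $Soc(_{R}R)=Soc(R_{R})$, so that the ideals actually agree; alternatively, one can reformulate the defining decomposition $M=P\oplus Q$, $Rm\cap Q\subseteq IM$, so that the relevant intersection falls into the socle irrespective of the side. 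This reconciliation of the two socles is the only genuinely delicate point; every other step is a direct citation of Theorem~3.8.
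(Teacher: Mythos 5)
The paper offers no proof of this corollary at all: it is presented as an immediate specialization of Theorem~3.8, using the observation (recorded before Corollary~3.7) that $R$ is a left PSD ring for any ideal contained in $Soc(_{R}R)$, together with its right-handed analogue. Your architecture --- Theorem~3.8 on the left with $I=Soc(_{R}R)$, Theorem~3.8 for $R^{op}$ with $I=Soc(R_{R})$, then a reconciliation of the two semiregularity conditions --- is the natural reading of what is intended, and you are right to isolate the one substantive issue: the left--right symmetry of $I$-semiregularity quoted in the Introduction is for a \emph{fixed} ideal $I$ and does not by itself identify ``$R$ is $Soc(_{R}R)$-semiregular'' with ``$R$ is $Soc(R_{R})$-semiregular''.

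The problem is that the bridge you propose for that step is false. A ring that is semiregular relative to both of its socles need not satisfy $Soc(_{R}R)=Soc(R_{R})$. Take $R$ to be the ring of upper triangular $2\times 2$ matrices over a field: $R$ is Artinian, hence semiperfect, hence $J(R)$-semiregular, hence $I$-semiregular for every ideal $I\supseteq J(R)$ (enlarging $I$ only weakens the condition $Rm\cap Q\subseteq IM$); since $J(R)$ lies in both socles, $R$ is both $Soc(_{R}R)$-semiregular and $Soc(R_{R})$-semiregular. Yet $Soc(_{R}R)$ consists of the matrices with zero second row while $Soc(R_{R})$ consists of those with zero first column, so the two socles are distinct. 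Hence your ``only genuinely delicate point'' cannot be settled the way you expect, and your fallback suggestion (reformulating the decomposition so the intersection lands in the socle ``irrespective of the side'') is not an argument. What actually closes the loop is the theorem of Yousif and Zhou [15] that $Soc(_{R}R)$-semiregularity and $Soc(R_{R})$-semiregularity are equivalent conditions on $R$, proved there through the element-wise idempotent criterion rather than through any equality of socles; citing (or reproving) that equivalence is the missing ingredient, and the rest of your proposal then goes through.
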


\begin{cor} A ring $R$ is $Z(_{R}R)$-semiregular if and
only if $_{R}R$ is a  finitely $Z(_{R}R)$-supplemented module if and
only if $R_{R}$ is a finitely $Z(R_{R})$-supplemented module.
\end{cor}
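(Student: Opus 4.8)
The plan is to read this off as the instance $I=Z(_{R}R)$ (and, on the right, $I=Z(R_{R})$) of Theorem 3.8. That theorem already asserts the three-way equivalence between $I$-semiregularity of $R$ and finite $I$-supplementedness of $_{R}R$ and of $R_{R}$; its \emph{only} non-automatic hypothesis is that $R$ be a left PSD ring for the relevant ideal. So the entire content of the proof is to make that hypothesis available, and to do so at exactly the step where it is needed.

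The two inputs I would use are already in the paper. First, the remark preceding Corollary 3.6 records that if $R$ is $Z(_{R}R)$-semiregular then $Z(_{R}R)=J(R)$. Second, the remark preceding Corollary 3.4 records that $R$ is always a left PSD ring for $J(R)$. Putting these together: the moment we know $R$ is $Z(_{R}R)$-semiregular, we also know $Z(_{R}R)=J(R)$, and therefore that $R$ is a left PSD ring for $Z(_{R}R)$. This looks circular — we seem to need semiregularity to get the PSD property that we want to use to prove semiregularity — and resolving that circularity is the real point.

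The resolution comes from inspecting which half of Theorem 3.8 actually uses the PSD hypothesis. Since that proof is modelled on Theorems 3.1 and 3.3, the implication ``$_{R}R$ finitely $I$-supplemented $\Rightarrow R$ is $I$-semiregular'' is the PSD-free direction: it is the analogue of $(1)\Rightarrow(2)$ in Theorem 3.1, whose proof never invokes that the module is PSD for $I$. Only the reverse implication ``$R$ is $I$-semiregular $\Rightarrow {}_{R}R$ finitely $I$-supplemented'' uses the PSD property. Hence I would argue the two directions asymmetrically. For ``$_{R}R$ finitely $Z(_{R}R)$-supplemented $\Rightarrow R$ is $Z(_{R}R)$-semiregular'' I apply the PSD-free half of Theorem 3.8 directly, with no hypothesis on $R$. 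For ``$R$ is $Z(_{R}R)$-semiregular $\Rightarrow {}_{R}R$ finitely $Z(_{R}R)$-supplemented'' I first invoke $Z(_{R}R)=J(R)$ to conclude that $R$ is a left PSD ring for $Z(_{R}R)$, and only then apply the remaining half of Theorem 3.8; here the use of the PSD property is legitimate precisely because semiregularity is being assumed. The equivalence with ``$R_{R}$ is finitely $Z(R_{R})$-supplemented'' I would obtain by running the same argument on the right and appealing to the left–right symmetry of $I$-semiregular rings: since each of $Z(_{R}R)$- and $Z(R_{R})$-semiregularity forces its singular ideal to equal $J(R)$, both reduce to ordinary ($J(R)$-)semiregularity, and the three conditions line up.

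I expect the only delicate point to be this bookkeeping of where the PSD hypothesis enters: once one verifies that the ``finitely supplemented $\Rightarrow$ semiregular'' direction of Theorem 3.8 is genuinely PSD-free, everything else is the substitution $I=Z(_{R}R)$ together with the identity $Z(_{R}R)=J(R)$, which converts the hypothesis ``left PSD ring for $Z(_{R}R)$'' into the always-true statement ``left PSD ring for $J(R)$.'' No new computation beyond that of Theorem 3.8 should be required.
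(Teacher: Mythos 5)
Your proposal is correct and follows essentially the same route the paper intends: specialize Theorem 3.8 to $I=Z(_{R}R)$ (resp.\ $Z(R_{R})$), supplying the ``left PSD ring for $I$'' hypothesis via the identity $Z(_{R}R)=J(R)$ that holds under $Z(_{R}R)$-semiregularity, exactly as in the remark preceding Corollary 3.6. Your explicit tracking of which implication actually consumes the PSD hypothesis resolves the apparent circularity that the paper leaves implicit, but it is the same argument.
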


Next we use $I$-supplemented modules to characterize $I$-perfect
rings.

\begin{defn} A ring $R$ is called a strongly left PSD ring for $I$ if any
projective left $R$-module is PSD for $I$.
\end{defn}

\begin{thm} Let $I$ be an ideal of $R$. Consider the following
conditions:
\begin{enumerate}
\item Every $R$-module is $I$-supplemented.
\item Every projective $R$-module is
$I$-supplemented.
\item Every projective $R$-module is
$I$-lifting.
\item Every free $R$-module is
$I$-lifting.

\item Every free $R$-module is $I$-supplemented.

\item $R$ is a left $I$-perfect.

\end{enumerate}
Then $(1)\Rightarrow (2)\Rightarrow (5)\Rightarrow (6)$,
$(3)\Rightarrow (4)\Rightarrow (5)$; $(2)\Rightarrow (3)$ and
$(6)\Rightarrow (1)$ if $R$ is a strongly left PSD ring for $I$.

\end{thm}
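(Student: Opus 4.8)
The plan is to reproduce the scheme of the proof of Theorem~3.3, replacing "finitely generated" by "arbitrary" throughout and strengthening the PSD hypothesis to the strongly PSD one. The implications $(1)\Rightarrow(2)$, $(2)\Rightarrow(5)$ and $(3)\Rightarrow(4)$ are immediate, since free modules are projective and projective modules are modules, so each class is contained in the previous one. For $(4)\Rightarrow(5)$ I would record the general fact that $I$-lifting implies $I$-supplemented: given a submodule $X$ of an $I$-lifting module $M$, take the decomposition $M=A\oplus B$ with $A\subseteq X$, $X\cap B\subseteq IB$ and $X\cap B$ PSD in $B$; then $B$ is itself an $I$-supplement of $X$, because $X+B\supseteq A+B=M$ while the two remaining conditions hold by construction. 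Hence every free $I$-lifting module is $I$-supplemented.

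For $(5)\Rightarrow(6)$ I would first apply the implication $(1)\Rightarrow(2)$ of Theorem~3.1 to each free module (which is projective), obtaining that every free module is $I$-semiperfect. Since every projective module is a direct summand of a free one, it then suffices to verify that a direct summand $N$ of an $I$-semiperfect module $M=N\oplus N'$ is again $I$-semiperfect. For $K\leq N$ I would take the decomposition $M=A\oplus B$ with $A$ projective, $A\subseteq K\subseteq N$ and $K\cap B\subseteq IM$; the modular law gives $N=A\oplus(N\cap B)$, and from $IM=IN\oplus IN'$ together with $K\cap B\subseteq N$ one gets $K\cap(N\cap B)=K\cap B\subseteq IM\cap N=IN$. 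Thus every projective module is $I$-semiperfect, and $R$ is left $I$-perfect by [10, Proposition 2.1]. Note that the definition of $I$-semiperfect module carries no PSD requirement, so this step is purely a bookkeeping computation.

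For the two conditional implications I would use the strongly left PSD hypothesis in the form that every projective module is PSD for $I$. Then $(2)\Rightarrow(3)$ follows from the equivalence, for projective modules that are PSD for $I$, of the properties $I$-supplemented, $I$-lifting and $I$-semiperfect (Theorem~3.1 together with [12, Corollary~2.4]): every projective module is PSD for $I$, so being $I$-supplemented upgrades to being $I$-lifting. For $(6)\Rightarrow(1)$, which is the analog of $(6)\Rightarrow(1)$ of Theorem~3.3 with the finiteness restriction removed, I would show that over a left $I$-perfect strongly left PSD ring every module $M/N$ admits a projective $I$-cover: pick a free module $F$ with an epimorphism onto $M/N$ and kernel $K$; since $F$ is $I$-semiperfect (as $R$ is $I$-perfect), write $F=A\oplus B$ with $A$ projective, $A\subseteq K$ and $K\cap B\subseteq IF$; then $B$ is projective, the induced map $B\to M/N$ is onto with kernel $K\cap B\subseteq IF\cap B=IB$, and because $B$ is PSD for $I$ the kernel $K\cap B$ is PSD in $B$, so $B\to M/N$ is a projective $I$-cover. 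Applying Lemma~2.11 to $M=N+M$ then produces an $I$-supplement of $N$ inside $M$, so $M$ is $I$-supplemented.

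The main obstacle I anticipate is exactly this last step: manufacturing projective $I$-covers for \emph{all} modules, not merely the finitely generated ones handled by [12, Theorem~2.13]. It is here that the upgrade from "left PSD" to "strongly left PSD for $I$" is indispensable, since one needs the kernel $K\cap B$ of the constructed cover to be PSD in the projective module $B$, and this is guaranteed only when every projective module is PSD for $I$. By contrast, the modular-law manipulations in $(5)\Rightarrow(6)$ and the direct-sum identity $IM=IN\oplus IN'$ are routine and I would assert them without detailed calculation.
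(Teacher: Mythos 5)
Your proposal is correct and follows essentially the same route as the paper: the unconditional implications are the obvious class containments, $(5)\Rightarrow(6)$ is Theorem~3.1 (you merely fill in the routine passage from free modules to their projective direct summands, which the paper leaves implicit), and $(6)\Rightarrow(1)$ manufactures a projective $I$-cover from a free presentation using the strongly left PSD hypothesis to make the kernel PSD, then invokes Lemma~2.11 --- exactly the paper's argument. There are no substantive differences.
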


\begin{proof} $``(1)\Rightarrow (2)\Rightarrow (5)"$ and
$``(3)\Rightarrow (4)\Rightarrow (5)"$ are clear.

$``(5)\Rightarrow (6)"$ By Theorem 3.1.

When $R$ is a strongly left PSD ring for $I$, $``(2)\Rightarrow
(3)"$ is obvious.

$``(6)\Rightarrow (1)"$ Let $M$ be a module. Then there is a free
module $F$ such that $\eta: F\rightarrow M$ is epic. Since $F$ is
$I$-semiperfect, there is a decomposition $F=F_{1}\oplus F_{2}$ such
that $F_{1}\subseteq Ker\eta$ and $F_{2}\cap Ker\eta\subseteq
IF_{2}$. Since $F$ is PSD for $I$, $F_{2}\cap Ker\eta$ is PSD in
$F$. By Lemma 2.2, $F_{2}\cap Ker\eta$ is PSD in $F_{2}$, so
$\eta|_{F_{2}}: F_{2}\rightarrow M$ is a projective $I$-cover of
$M$. The rest is similar to Theorem 3.3.
\end{proof}

Let $I=J(R)$ or $\delta(_{R}R)$ in Theorem 3.14, since $R$ is a
strongly left PSD ring, we have the following.

\begin{cor} ([6, Theorem 4.41])The following statements are equivalent for a ring $R$.
\begin{enumerate}
\item $R$ is a left perfect.
\item Every $R$-module is supplemented.
\item Every projective $R$-module is
supplemented.
\item Every projective $R$-module is
lifting.
\item Every free $R$-module is
lifting.

\item Every free $R$-module is supplemented.
\end{enumerate}
\end{cor}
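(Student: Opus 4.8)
The plan is to reduce Corollary 3.15 to Theorem 3.14 by specializing the ideal $I$ to either $J(R)$ or $\delta(_{R}R)$, exactly as the sentence preceding the statement suggests. The essential observation is that with these two choices of $I$, the notions appearing in Theorem 3.14 collapse onto the classical notions appearing in Corollary 3.15: an $I$-supplemented module becomes a supplemented module, an $I$-lifting module becomes a lifting module, and a left $I$-perfect ring becomes a left perfect ring. First I would verify these identifications. By Proposition 2.3(1), for any submodule $N$ we have $N\ll M$ if and only if $N\subseteq Rad(M)$ and $N$ is PSD in $M$; comparing the defining conditions of an $I$-supplement ($X\cap Y\subseteq IY$ and $X\cap Y$ PSD in $Y$) against those of an ordinary supplement ($X\cap Y\ll Y$) shows that, at least when $I=J(R)$ and $Rad(Y)=J(R)Y$, the two definitions coincide. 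The analogous collapse for $I$-lifting versus lifting follows from the equivalent decomposition formulation of lifting recorded in Definition 2.4 together with Corollary 2.5(2).

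The next step is to supply the hypothesis that Theorem 3.14 needs, namely that $R$ is a \emph{strongly left PSD ring for $I$}, so that the implications $(2)\Rightarrow(3)$ and $(6)\Rightarrow(1)$ become available and the six conditions are genuinely equivalent rather than merely linked by a chain. This is where I expect to lean on the parenthetical remark already asserted in the text immediately before Corollary 3.4, that for $I=J(R)$ and $I=\delta(_{R}R)$ the ring $R$ is automatically a (strongly) left PSD ring. Concretely, every projective module $P$ satisfies $Rad(P)\subseteq P$ with the property that submodules of $Rad(P)$ are small, hence PSD by Proposition 2.3(1); and likewise submodules of $\delta(P)$ are $\delta$-small, hence PSD by Proposition 2.3(2). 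Thus any projective $R$-module is PSD for $J(R)$ and for $\delta(_{R}R)$, which is precisely the strongly left PSD condition of Definition 3.13. With this hypothesis in force, Theorem 3.14 yields the full equivalence $(1)\Leftrightarrow(2)\Leftrightarrow(3)\Leftrightarrow(4)\Leftrightarrow(5)\Leftrightarrow(6)$.

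Finally I would translate the resulting equivalences back into the classical language. Taking $I=J(R)$, condition (6) of Theorem 3.14 reads ``$R$ is left $J(R)$-perfect'', which by the discussion in the Introduction (a ring is $(\delta\text{-})$perfect iff it is $(\delta(_{R}R)\text{-})J(R)$-perfect) is exactly ``$R$ is left perfect''; the remaining five conditions translate into the supplemented and lifting statements of Corollary 3.15 via the identifications established in the first paragraph. I would present this as a one-line proof: set $I=J(R)$, note that $R$ is then a strongly left PSD ring, and apply Theorem 3.14 together with Proposition 2.3 and the standard facts that $J(R)$-supplemented means supplemented and $J(R)$-lifting means lifting for the relevant modules.

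The main obstacle, and the point I would be most careful about, is the identification ``$I$-supplemented $=$ supplemented'' when $I=J(R)$. The definition of $I$-supplement requires $X\cap Y\subseteq IY=J(R)Y$, whereas the small condition $X\cap Y\ll Y$ combined with Proposition 2.3 only gives $X\cap Y\subseteq Rad(Y)$. These agree precisely when $Rad(Y)=J(R)Y$, which need not hold for an arbitrary supplement module $Y$ unless $Y$ is projective or $R$ forces $J(R)M=Rad(M)$ for all $M$. This is the very caveat flagged in Theorems 2.6--2.8. For Corollary 3.15 the issue is absorbed because the modules in play (conditions (3)--(6)) are projective or free, and for the general-module conditions (1)--(2) the equivalence is routed through condition (6) via the left $I$-perfect hypothesis rather than through a direct module-by-module translation; so the identification only ever needs to be invoked where $Rad(Y)=J(R)Y$ is guaranteed. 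Making sure the logical routing avoids any claim of the form ``supplemented implies $J(R)$-supplemented'' for non-projective modules is the delicate bookkeeping step.
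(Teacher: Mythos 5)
Your proposal is correct and follows exactly the paper's route: the paper derives this corollary by setting $I=J(R)$ in Theorem 3.14 and noting that $R$ is automatically a strongly left PSD ring for $J(R)$. Your additional care about the identification of $J(R)$-supplemented with supplemented (the $Rad(Y)$ versus $J(R)Y$ issue, resolved via Theorems 2.6--2.8 and the fact that $J(R)M=Rad(M)$ over a left perfect ring) is a detail the paper leaves implicit, but it does not change the approach.
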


\begin{cor} ([5, Theorem 3.4]) The following statements are equivalent for a ring $R$.
\begin{enumerate}
\item $R$ is a left $\delta$-perfect.
\item Every $R$-module is $\delta$-supplemented.
\item Every projective $R$-module is
$\delta$-supplemented.
\item Every projective $R$-module is
$\delta$-lifting.
\item Every free $R$-module is
$\delta$-lifting.

\item Every free $R$-module is $\delta$-supplemented.
\end{enumerate}
\end{cor}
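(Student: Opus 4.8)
The plan is to obtain Corollary 3.16 as the instance $I = \delta(_R R)$ of Theorem 3.14, translating each $\delta(_R R)$-condition there into the corresponding $\delta$-condition here. First I would record, as in the sentence preceding the statement, that $R$ is a strongly left PSD ring for $I = \delta(_R R)$; this makes every implication in Theorem 3.14 available, so that its six conditions are pairwise equivalent. What remains is to identify each condition of Theorem 3.14 with the intended condition of Corollary 3.16.

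For the conditions involving projective or free modules — that is, Theorem 3.14 (2),(3),(4),(5), which are to become Corollary 3.16 (3),(4),(5),(6) — the translation is immediate, since free modules are projective: for a projective module Theorem 2.6 shows that being $\delta(_R R)$-supplemented is equivalent to being $\delta$-supplemented, and Theorem 2.8 shows that being $\delta(_R R)$-lifting is equivalent to being $\delta$-lifting. Theorem 3.14 (6), that $R$ is left $\delta(_R R)$-perfect, coincides with Corollary 3.16 (1), that $R$ is left $\delta$-perfect: by [10, Proposition 2.1] each is equivalent to every projective module being $\delta(_R R)$- (respectively $\delta$-) semiperfect, and for projective modules these two notions agree by the convention for $\delta$-semiperfectness fixed in the introduction. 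This already yields the equivalence of Corollary 3.16 (1),(3),(4),(5),(6) with no reference to non-projective modules.

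The step requiring care — and the main obstacle — is the condition ranging over all modules: Theorem 3.14 (1) asserts that every module is $\delta(_R R)$-supplemented, while Corollary 3.16 (2) asserts that every module is $\delta$-supplemented, and for a non-projective $M$ these are a priori distinct. By Theorem 2.6 the implication $\delta(_R R)$-supplemented $\Rightarrow$ $\delta$-supplemented always holds, but the converse is guaranteed only once $\delta(R)M = \delta(M)$ for every module $M$. I would resolve the apparent circularity by bootstrapping through semiperfectness. Corollary 3.16 (2) $\Rightarrow$ (3) is trivial, as projective modules are modules; conversely, any of the conditions already shown equivalent forces $R$ to be left $\delta$-perfect, hence left $\delta$-semiperfect, so that $\delta(R)M = \delta(M)$ holds for all $M$ by the remark following Theorem 2.8. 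With that identity, Theorem 2.6 promotes Theorem 3.14 (1) to the statement that every module is $\delta$-supplemented, i.e. Corollary 3.16 (2), closing the cycle. Collecting these equivalences completes the proof.
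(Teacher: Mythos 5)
Your proposal takes exactly the paper's route: the paper's entire ``proof'' is the one-line remark that one should put $I=\delta(_{R}R)$ in Theorem 3.14 and observe that $R$ is then a strongly left PSD ring for $I$. You additionally spell out the translation between the $\delta(_{R}R)$-conditions and the $\delta$-conditions --- via Theorems 2.6 and 2.8 for the projective/free cases and the bootstrap through left $\delta$-semiperfectness (hence $\delta(R)M=\delta(M)$ for all $M$) for condition (2) --- which the paper leaves entirely implicit; this filling-in is correct and is the only part of the argument that actually required care.
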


Since if $I\leq Soc(_{R}R)$, then $R$ is a strongly left PSD ring
for $I$, and hence we have

\begin{cor} The following statements are equivalent for a ring $R$.
\begin{enumerate}
\item $R$ is a left $Soc(_{R}R)$-perfect.
\item Every $R$-module is $Soc(_{R}R)$-supplemented.
\item Every projective $R$-module is
$Soc(_{R}R)$-supplemented.
\item Every projective $R$-module is
$Soc(_{R}R)$-lifting.
\item Every free $R$-module is
$Soc(_{R}R)$-lifting.

\item Every free $R$-module is $Soc(_{R}R)$-supplemented.
\end{enumerate}
\end{cor}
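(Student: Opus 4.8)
The plan is to obtain this corollary as the specialization of \thmref{} 3.14 to the ideal $I=Soc(_{R}R)$. With this choice, ``$I$-perfect'' reads as ``$Soc(_{R}R)$-perfect'' and each of the conditions (1)--(6) of the corollary is precisely the corresponding condition of Theorem 3.14. Recall that in Theorem 3.14 the implications $(1)\Rightarrow(2)\Rightarrow(5)\Rightarrow(6)$ and $(3)\Rightarrow(4)\Rightarrow(5)$ hold for every ideal $I$, whereas $(2)\Rightarrow(3)$ and $(6)\Rightarrow(1)$ require the extra hypothesis that $R$ be a strongly left PSD ring for $I$. Thus the whole task reduces to verifying that this hypothesis is automatic when $I=Soc(_{R}R)$; once it is in force, the implications chain into the cycle $(1)\Rightarrow(2)\Rightarrow(3)\Rightarrow(4)\Rightarrow(5)\Rightarrow(6)\Rightarrow(1)$, which yields the asserted equivalence.

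The substantive step is therefore the remark preceding the corollary: if $I\leq Soc(_{R}R)$, then $R$ is a strongly left PSD ring for $I$. To prove this I would fix a projective left $R$-module $P$ and show that every submodule $N\leq IP$ is PSD in $P$. First I would observe that $IP$ is semisimple: since $I\leq Soc(_{R}R)$, the left module $I$ is semisimple, each $Ip$ with $p\in P$ is a homomorphic image of $I$ under $a\mapsto ap$ and hence semisimple, and $IP=\sum_{p\in P}Ip$ is a sum of semisimple modules, so it is semisimple. Consequently its submodule $N$ is semisimple as well.

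Next, given any $X\leq P$ with $N+X=P$, I would use semisimplicity of $N$ to split off a complement of $N\cap X$ inside $N$, writing $N=(N\cap X)\oplus S$ for a suitable $S\leq N$. Since $N\cap X\leq X$, this gives $P=N+X=S+X$; and because $S\leq N$, we have $S\cap X\subseteq S\cap(N\cap X)=0$, whence $P=S\oplus X$. In particular $S$ is a direct summand of the projective module $P$ and is therefore projective, with $S\leq N$ and $P=S\oplus X$, which is exactly the defining property that makes $N$ PSD in $P$. As $P$ and $X$ were arbitrary, every projective $P$ is PSD for $I$, so $R$ is a strongly left PSD ring for $I$.

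Finally I would combine the two parts: taking $I=Soc(_{R}R)$, which trivially satisfies $I\leq Soc(_{R}R)$, the ring $R$ is strongly left PSD for $I$ by the middle step, so all the conditional implications of Theorem 3.14 become available and the six statements are equivalent. I do not anticipate a serious obstacle; the only point requiring care is the quantifier order in the definition of PSD, namely that the projective summand $S$ may depend on the completing submodule $X$, which is precisely how the notion is applied in the proof of \lemref{} 2.2.
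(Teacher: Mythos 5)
Your proposal is correct and follows the paper's own route exactly: the corollary is obtained by specializing Theorem 3.14 to $I=Soc(_{R}R)$ after noting that $R$ is automatically a strongly left PSD ring for any $I\leq Soc(_{R}R)$. The only difference is that you supply a (correct) semisimplicity argument for that last assertion, which the paper merely states without proof.
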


Since if $R$ is $Z(_{R}R)$-perfect, then $Z(_{R}R)=J(R)\subseteq
\delta(_{R}R)$, we have the following result.

\begin{cor} The following statements are equivalent for a ring $R$.
\begin{enumerate}
\item $R$ is a left $Z(_{R}R)$-perfect.
\item Every $R$-module is $Z(_{R}R)$-supplemented.
\item Every projective $R$-module is
$Z(_{R}R)$-supplemented.
\item Every projective $R$-module is
$Z(_{R}R)$-lifting.
\item Every free $R$-module is
$Z(_{R}R)$-lifting.

\item Every free $R$-module is $Z(_{R}R)$-supplemented.
\end{enumerate}
\end{cor}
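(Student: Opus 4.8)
The plan is to obtain this corollary as the specialization of Theorem 3.14 to the ideal $I=Z(_{R}R)$, exploiting the observation recorded just before the statement that the hypothesis ``$R$ is $Z(_{R}R)$-perfect'' forces $Z(_{R}R)=J(R)\subseteq\delta(_{R}R)$. The point to keep in mind is that among the implications of Theorem 3.14 only $(2)\Rightarrow(3)$ and $(6)\Rightarrow(1)$ are conditional, each requiring $R$ to be a strongly left PSD ring for $I$; so the entire task reduces to supplying this property when $I=Z(_{R}R)$, where it is not part of the hypotheses.

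I would begin by reading off the unconditional content of Theorem 3.14. For every ideal, and in particular for $I=Z(_{R}R)$, the chains $(1)\Rightarrow(2)\Rightarrow(5)\Rightarrow(6)$ and $(3)\Rightarrow(4)\Rightarrow(5)$ hold without any extra assumption on $R$. Tracing these through shows that each of conditions (1)--(5) implies (6). Hence, to establish the full equivalence, it suffices to prove that (6) implies every one of the remaining conditions.

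The decisive move is then to bootstrap the strongly PSD property out of condition (6). Assuming (6), that is, that $R$ is left $Z(_{R}R)$-perfect, the remark preceding the corollary gives $Z(_{R}R)=J(R)$. Since every ring is a strongly left PSD ring for $J(R)$ (this is precisely the fact invoked for Corollary 3.15), and since here $Z(_{R}R)=J(R)$, it follows that $R$ is a strongly left PSD ring for $Z(_{R}R)$. With this hypothesis now in force, the conditional implications of Theorem 3.14 become available for $I=Z(_{R}R)$: combining $(6)\Rightarrow(1)$, the unconditional $(1)\Rightarrow(2)$, the now-available $(2)\Rightarrow(3)$, and the unconditional $(3)\Rightarrow(4)\Rightarrow(5)$, one sees that (6) implies each of (1)--(5). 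Together with the previous paragraph this closes the cycle and yields the equivalence of all six statements.

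I expect the crux to be exactly this bootstrapping. Theorem 3.14 delivers a genuine equivalence only under the strongly PSD hypothesis, which is not assumed in the corollary, so the argument cannot simply quote the theorem; it must first descend to condition (6) through the unconditional chains and only afterward use the identity $Z(_{R}R)=J(R)$ to recover the missing hypothesis a posteriori. Once this apparent circularity is broken in this order, the remaining implications are immediate, and no further computation is needed beyond citing Theorem 3.14.
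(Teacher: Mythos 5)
Your proof is correct and is essentially the paper's own (one\nobreakdash-line) argument: specialize Theorem 3.14 to $I=Z(_{R}R)$, use the unconditional chains to reduce every condition to left $Z(_{R}R)$-perfectness, and then invoke $Z(_{R}R)=J(R)$ to recover a posteriori the strongly left PSD hypothesis needed for the two conditional implications. Only note that the corollary renumbers the conditions relative to Theorem 3.14 (its condition (1) is the theorem's (6), etc.), so your labels track the theorem's list rather than the corollary's.
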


\end{document}